\newlength\knuthian@fdfive
\def\mathpal@save#1{\let\was@math@style=#1\relax}
\def\utilde#1{\mathpalette\mathpal@save
              {\setbox124=\hbox{$\was@math@style#1$}%
\setbox125=\hbox{$\fam=3\global\knuthian@fdfive=\fontdimen5\font$}
\setbox125=\hbox{$\widetilde{\vrule height 0pt depth 0pt width \wd124}$}%
               \baselineskip=1pt\relax
               \lineskiplimit=\z@\relax
               \lineskip=1pt\relax
               \vtop{\copy124\copy125\vskip -\knuthian@fdfive}}}
\DeclareMathOperator{\base}{\mathsf{B\Sigma_1}}
\declaretheorem[numberwithin=section]{theorem}
\newtheorem{lemma}[theorem]{Lemma}
\newtheorem{proposition}[theorem]{Proposition}
\newtheorem{question}[theorem]{Question}
\newtheorem*{claim}{Claim}
\theoremstyle{definition}
\newtheorem{definition}[theorem]{Definition}
\newtheorem*{definition*}{Definition}
\theoremstyle{remark}
\newtheorem{remark}[theorem]{Remark}
\newcommand{\form}{\mathsf{Form}}
\newcommand{\num}[1]{\underline{#1}}
\title{Descending sequences in reflection hierarchies}
\author{Mateusz \L{}e\l{}yk}
\address{Faculty of Philosophy, University of Warsaw}
\email{mlelyk@uw.edu.pl}
\author{James Walsh}
\address{Department of Philosophy, New York University}
\email{jmw534@nyu.edu}
\thanks{Thanks to Patrick Lutz, Leszek Ko\l{}odziejczyk, Antonio Montalb\'an, and Albert Visser.}
\begin{document}

\maketitle

\begin{abstract}
    There is no recursively enumerable sequence of sufficiently strong 2-consistent r.e.\ theories such that each proves the $2$-consistency of the next. Montalb\'an and Shavrukov independently asked whether this result generalizes to $0'$-recursive sequences. We consider a general version of this problem: For arbitrary $n$, for which complexity classes $\Gamma$ are there $\Gamma$-definable sequences of $n$-consistent r.e.\ theories each of which proves the $n$-consistency of the next? The answer to this question depends not only on $n$ and $\Gamma$ but also on the manner in which sequences are encoded in arithmetic. We provide positive answers for certain encodings and negative answers for others.
\end{abstract}

\section{Introduction}

G\"{o}del's second incompleteness theorem states that no reasonable consistent theory proves its own consistency.\footnote{By a reasonable theory we mean an effectively axiomatized theory that interprets a modicum of arithmetic.}  This also prohibits a certain kind of circularity with respect to provable consistency. That is, there are no consistent reasonable theories $T_0,T_1,\dots,T_n$ such that $T_0$ proves the consistency of $T_1$, $T_1$ proves the consistency of $T_2$,\dots, and $T_n$ proves the consistency of $T_0$. Otherwise, $T_0$ could chain each of these consistency proofs together to prove its own consistency, contradicting G\"{o}del's theorem.

Nevertheless, G\"{o}del's theorem does not prohibit all \emph{ill-foundedness} with respect to provable consistency. There are infinite sequences $(T_n)_{n\in\mathbb{N}}$ of theories, each of which is reasonable and consistent, such that for each $n\in\mathbb{N}$, $T_n$ proves the consistency of $T_{n+1}$. There are many means of producing such descending sequences, but they generally involve self-reference or other \emph{ad hoc} tricks. It is not easy---if it is even possible---to define descending sequences in consistency strength consisting only of natural theories.

Why is it so difficult to find descending sequences of reasonable consistent theories? One potential explanation is that any such sequence must be recursion-theoretically complicated. Gaifman once asked whether any such sequence could be recursive. The following answer is recorded in \cite{lindstrom1997aspects, smorynski1985self}. See also \cite[\textsection 6]{montalban2019inevitability}.

\begin{theorem}[H.\ Friedman, Smory\'nski, Solovay]\label{fss-theorem}
    There is a recursive sequence $(T_n)_{n\in\mathbb{N}}$ of sound r.e.\ extensions of $\mathsf{PA}$ such that for each $n\in\mathbb{N}$, $T_n \vdash \mathsf{Con}(T_{n+1}).$
    
    However, there is no recursively enumerable sequence $(T_n)_{n\in\mathbb{N}}$ of consistent r.e.\ extensions of $\mathsf{PA}$ such that $\mathsf{PA}\vdash \forall x \; \mathsf{Pr}_{T_x} \big( \mathsf{Con}(T_{x+1}) \big).$
\end{theorem}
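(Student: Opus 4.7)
For the constructive half of the theorem, the plan is to invoke Kleene's recursion theorem to obtain a primitive recursive function $n \mapsto T_n$ such that each $T_n$ is an r.e.\ axiomatization of $\mathsf{PA} + \mathsf{Con}(T_{n+1})$. The provability condition $T_n \vdash \mathsf{Con}(T_{n+1})$ is then immediate from the fixed-point equation. Since $\mathsf{PA}$ itself is sound, soundness of each $T_n$ reduces to truth of $\mathsf{Con}(T_{n+1})$, i.e.\ consistency of $T_{n+1}$; one verifies this externally by arguing that the fixed point produces a sequence all of whose theories are simultaneously satisfied by the standard model $\mathbb{N}$, via a careful unfolding of the recursion showing that inconsistency at any level would already manifest as a concrete finite derivation.

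For the impossibility half, suppose for contradiction that such an r.e.\ sequence $(T_n)_{n \in \mathbb{N}}$ exists. The target is to deduce $\mathsf{PA} \vdash \mathsf{Con}(T_0)$; this contradicts G\"{o}del's second incompleteness theorem applied to $T_0$ (which contains $\mathsf{PA}$ and is consistent by hypothesis). By $\Sigma_1$-soundness of $\mathsf{PA}$, the hypothesis already gives $T_n \vdash \mathsf{Con}(T_{n+1})$ externally for each standard $n$. The extra content of the uniform hypothesis is that $T_0$ itself derives the whole scheme $\forall x \, \mathsf{Pr}_{T_x}(\mathsf{Con}(T_{x+1}))$, so $T_0$ reasons about the entire chain at once. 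My plan is to combine this internal access with a self-referential fixed-point construction (or a L\"{o}b-style diagonal) to transform the uniform chain into a $\mathsf{PA}$-proof of $\mathsf{Con}(T_0)$, for instance by producing a sentence whose provability in $\mathsf{PA}$ is equivalent to a condition that the chain itself witnesses.

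The main obstacle, and the heart of the theorem, is that consistency does not propagate along the chain: ``$T_x$ consistent and $T_x \vdash \mathsf{Con}(T_{x+1})$'' does \emph{not} imply $\mathsf{Con}(T_{x+1})$, since this inference requires $\Sigma_1$-soundness of $T_x$. Thus a naive pointwise induction is unavailable, and the proof must leverage the \emph{uniformity} of the $\mathsf{PA}$-provability hypothesis essentially---most plausibly through a global fixed-point or diagonal construction that exploits $\mathsf{PA}$'s access to the whole $\Pi_2$-sentence at once, rather than any instance-by-instance chaining.
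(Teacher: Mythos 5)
The paper cites this result rather than proving it, but the Visser-style proof in \textsection 3.1 (of the closely related theorem with $\mathsf{RFN}$ in place of $\mathsf{Con}$) shows exactly what your positive half is missing. You propose the bare fixed point $T_n = \mathsf{PA} + \mathsf{Con}(T_{n+1})$ and claim soundness follows ``by a careful unfolding of the recursion,'' but this recursion has no base case, so there is nothing to unfold: it bottomlessly reduces the soundness of $T_n$ to the consistency of $T_{n+1}$, and the fixed-point equation alone does not settle whether the $T_n$ are consistent (one can check that either every $T_n$ is consistent or every $T_n$ is inconsistent, and which case obtains is sensitive to the choice of numeration). The device that makes the argument go through is a \emph{safety valve}, as in the paper's construction: set $T_n = \mathsf{PA} + \mathsf{Con}(T_{n+1})$ only so long as no $k \le n$ codes a $\mathsf{ZF}$-proof of $\bot$, and $T_n = \mathsf{PA}$ otherwise. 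Then, reasoning inside $\mathsf{ZF}$, one shows $\neg\mathsf{Con}(\mathsf{ZF}) \to \mathsf{Con}(T_0)$ (because the recursion genuinely terminates under that hypothesis), contraposes to $\neg\mathsf{Con}(T_0) \to \mathsf{Con}(\mathsf{ZF})$, invokes G\"odel's second theorem to conclude $\mathsf{ZF} \nvdash \neg\mathsf{Con}(T_0)$, and uses $\Sigma_1$-completeness to get the real-world truth of $\mathsf{Con}(T_0)$; consistency of all $T_n$, hence soundness, then follows by induction. Without the valve you have no lever on the fixed point.

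Your negative half correctly identifies the crux---consistency does not propagate along the chain, and the uniformity of $\mathsf{PA}\vdash \forall x\,\mathsf{Pr}_{T_x}(\mathsf{Con}(T_{x+1}))$ must therefore be used essentially---but what follows is a plan, not an argument. You never exhibit the L\"ob sentence or the self-referential formula, and it is not at all routine to produce one: the natural first attempts fail. For example, it is tempting to argue $\mathsf{PA} + \mathsf{Con}(T_0) \vdash \mathsf{Con}(\mathsf{PA} + \mathsf{Con}(T_1))$ (which does hold, since provably $T_0 \supseteq \mathsf{PA} + \mathsf{Con}(T_1)$) and then close the loop by G\"odel~2, but the theory proving consistency and the theory whose consistency is proved do not coincide, so G\"odel~2 does not apply directly; and the naive L\"ob candidate $\exists x\, \neg\mathsf{Con}(T_x)$ does not yield the needed premiss $\mathsf{PA} \vdash \mathsf{Pr}_\mathsf{PA}(\psi)\to\psi$ because a consistent $T_0$ proving $\psi$ gives you a (possibly nonstandard) witness in a model, not a standard one. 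Also note that the template the paper uses for its own negative results in \textsection 3--4 (encode the sequence's existence as a sentence $\mathsf{DS}$, show $\mathsf{B\Sigma_1}+\mathsf{DS}$ proves its own consistency, apply G\"odel~2) relies on the theories being consistent with true $\Sigma_{m+1}$ sentences for $m\ge 2$, which fails for plain consistency; so the $m=0$ case genuinely requires a different diagonal, and that diagonal is the content you still need to supply.
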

That is, though there are recursive descending sequences in the consistency strength hierarchy, all such sequences enjoy some high degree of non-uniformity. The interplay between uniformity considerations and the existence of descending sequences in reflection hierarchies is a recurring theme in this paper.

Surprisingly, the situation changes when one considers stronger reflection principles. Recall that a theory $T$ is $n$-consistent if $T$ is consistent with the true $\Pi_n$ theory of arithmetic. We can formalize the $n$-consistency of an r.e.\ theory $T$ using a single $\Pi_{n+1}$ sentence in the language of first-order arithmetic. 
$$n\mathsf{Con}(T) := \forall \varphi \in \Pi_n \big( \mathsf{True}_{\Pi_n}(\varphi) \to \mathsf{Con}(T+\varphi)\big)$$
Note that $n$-consistency is $\mathsf{EA}$-provably equivalent to uniform $\Pi_{n+1}$-reflection and also to uniform $\Sigma_{n}$-reflection (see \textsection \ref{preliminaries}).

The following result comes from previous work of Pakhomov and the second-named author \cite[Theorem 3.4]{pakhomov2021reflection}.

\begin{theorem}[Pakhomov--Walsh]\label{original-descending}
    There is no r.e.\ sequence $(T_n)_{n\in\mathbb{N}}$ of 2-consistent r.e.\ extensions of $\mathsf{B\Sigma_1}$ such that for each $n\in\mathbb{N}$, $T_n\vdash  2\mathsf{Con}(T_{n+1}).$\footnote{\cite[Theorem 3.4]{pakhomov2021reflection} is stated for extensions of $\mathsf{EA}$. However, this is because that paper uses a slightly non-standard formalization of provability, sometimes called \emph{smooth provability}. For the proof to work with the standard provability predicate requires $\mathsf{B\Sigma_1}$.}
\end{theorem}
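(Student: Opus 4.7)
My plan is to prove the theorem via reflection ranks: assign to each 2-consistent r.e.\ extension $T$ of $\mathsf{B\Sigma_1}$ an ordinal $|T|_2$, its 2-reflection rank, so that $T \vdash 2\mathsf{Con}(S)$ forces $|S|_2 < |T|_2$. Since the ordinals are well-founded, no infinite descending chain can exist in the relation $\prec_2$ defined by $S \prec_2 T$ iff $T \vdash 2\mathsf{Con}(S)$, which is exactly what the theorem asserts.

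As a preliminary, observe that 2-consistency is $\mathsf{EA}$-provably equivalent to uniform $\Pi_3$-reflection, so each $T_n$ is $\Pi_3$-sound. Since $2\mathsf{Con}(T_{n+1})$ is itself $\Pi_3$, the hypothesis $T_n \vdash 2\mathsf{Con}(T_{n+1})$ forces $2\mathsf{Con}(T_{n+1})$ to be true, so the tail of the sequence remains 2-consistent automatically, and the whole chain lives inside the $\Pi_3$-sound fragment.

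I would define $|T|_2$ by transfinite recursion as the supremum of $|S|_2 + 1$ over all 2-consistent r.e.\ extensions $S$ of $\mathsf{B\Sigma_1}$ satisfying $T \vdash 2\mathsf{Con}(S)$. The theorem amounts precisely to the claim that this recursion terminates. To witness termination, one invokes ordinal analysis of iterated $\Pi_3$-reflection: build a Turing--Feferman progression $R^\alpha$ over $\mathsf{B\Sigma_1}$ by adjoining $\mathsf{RFN}_{\Pi_3}$ along a canonical ordinal notation system; then show that for every 2-consistent r.e.\ $T$, some $R^\alpha$ already captures the $\Pi_3$-consequences of $T$. Define $|T|_2$ as the least such $\alpha$; conservativity along the progression then yields $|S|_2 < |T|_2$ whenever $T \vdash 2\mathsf{Con}(S)$.

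I expect the main obstacle to be this well-foundedness step, and in particular the need to work above $\mathsf{B\Sigma_1}$ rather than $\mathsf{EA}$. The subtlety is that the Orey-style reformulation $2\mathsf{Con}(T) \leftrightarrow \mathsf{Con}(T + \mathsf{True}_{\Pi_2})$ driving the ordinal comparison relies on formalized compactness, which uses $\Sigma_1$ collection to bound the witnesses appearing in proofs. An alternative, less ordinal-theoretic route is to exploit this equivalence directly: set $T_n^+ := T_n + \mathsf{True}_{\Pi_2}$ so that $T_n^+ \vdash \mathsf{Con}(T_{n+1}^+)$ and each $T_n^+$ is consistent, then derive a contradiction by a Friedman-style diagonal argument adapted to $\Pi_2$-definable (rather than r.e.) sequences. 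Either route must, at its core, leverage the r.e.-ness of the index function $n \mapsto T_n$ to internalize the chain of reflections inside a single theory and contradict the 2-consistency analogue of Gödel's second incompleteness theorem.
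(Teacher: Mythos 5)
Your proposal is genuinely different from the route taken in the paper (and in the cited source \cite{pakhomov2021reflection}), and it has a structural gap that I don't see how to fill. The actual proof does \emph{not} go through ordinal analysis or reflection ranks: it encodes the assertion ``there exists a descending $2$-sequence'' as a single arithmetical sentence $\mathsf{DS}$, and then shows that $\mathsf{B\Sigma_1}+\mathsf{DS}$ proves its own consistency, contradicting G\"odel's second incompleteness theorem. The self-strengthening step works by a shift trick: inside $\mathsf{B\Sigma_1}+\mathsf{DS}$, one names the first theory $T_0$ on the sequence, shifts the sequence one place to the left, and uses the $2$-consistency of $T_0$ (which makes $T_0$ consistent with every true $\Pi_2$ sentence, hence with the shifted instance of $\mathsf{DS}$) to conclude $\mathsf{Con}(T_0+\mathsf{DS})$ and therefore $\mathsf{Con}(\mathsf{B\Sigma_1}+\mathsf{DS})$. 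The present paper's proofs of Theorems \ref{generalization}, \ref{new-uni-negative}, and \ref{main-negative} all exhibit exactly this mechanism.

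Your main route via reflection ranks is circular: defining $|T|_2$ by transfinite recursion as $\sup\{|S|_2+1 : T\vdash 2\mathsf{Con}(S)\}$ presupposes well-foundedness of the relation $S\prec_2 T$, which \emph{is} the theorem to be proved. You acknowledge this and try to escape via a Turing--Feferman progression $R^\alpha$, but the escape rests on the claim that every $2$-consistent r.e.\ extension of $\mathsf{B\Sigma_1}$ has its $\Pi_3$-consequences captured by some stage $R^\alpha$ of a \emph{fixed} recursive progression. That claim is at best highly nontrivial and, without a careful treatment of which notation system one iterates along, is simply false: a single recursive progression does not cofinally cover the $\Pi_3$-consequences of all $2$-consistent r.e.\ theories (consider, e.g., strong set theories). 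Even granting it, the inference ``$R^\alpha\vdash 2\mathsf{Con}(S)$ implies $|S|_2<\alpha$'' is not a conservativity fact but a conservation/reduction result that would itself require exactly the kind of ordinal-analytic machinery you are trying to bootstrap.

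Your alternative route via the Orey-style equivalence also has a gap: $T_n^+ := T_n + \mathsf{True}_{\Pi_2}$ is not recursively axiomatized, since $\mathsf{True}_{\Pi_2}$ is $\Pi_2$-complete. So the Friedman--Smory\'nski--Solovay theorem, which is explicitly about r.e.\ sequences of r.e.\ theories, does not apply to $(T_n^+)_n$. Your last sentence---that one should internalize the chain in a single theory and contradict an incompleteness theorem---is the right instinct, but the missing ingredient is the shift-the-sequence encoding: it is precisely what lets the $2$-consistency of the head of the sequence be recycled to reestablish $\mathsf{DS}$ with the head deleted, without ever leaving the r.e.\ setting.
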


Theorem \ref{original-descending} does not generalize to 1-consistency; there exists a recursively enumerable sequence of 1-consistent r.e.\ extensions of $\mathsf{B\Sigma_1}$ each of which proves the 1-consistency of the next \cite[Theorem 3.8]{pakhomov2021reflection}.

Upon seeing Theorem \ref{original-descending}, Montalb\'{a}n and Shavrukov independently asked (in private communication) whether there exists a $0'$-recursive or $0''$-recursive sequence $(T_n)_{n\in\mathbb{N}}$ of 2-consistent r.e.\ theories each of which proves the 2-consistency of the next; see \cite[Question 3.10]{pakhomov2021reflection}. These questions introduce subtleties about what it means for each theory to prove the 2-consistency ``of the next.'' The theories under consideration are all $\Sigma_1$-complete, so they have a firm grasp on recursively enumerable sequences. More precisely, given a $\Sigma_1$ presentation of a sequence and given any number $n$, a $\Sigma_1$-complete theory can correctly identify the $n$th element of the sequence. There are multiple ways of encoding $\Sigma_1$-definable sequences of theories in arithmetic, but choosing between them is often not important for this reason. The same does not hold for sequences that are not $\Sigma_1$-definable. So when we turn our attention to $0'$-recursive or $0''$-recursive sequences, choosing between the multiple ways of encoding sequences \emph{does} matter.

\subsection{Encoding Sequences}

Before summarizing the main results of the paper, it is worth saying a word about the different approaches we consider to encoding sequences. In this introduction we provide only rough intuitive glosses; for precise definitions see \textsection \ref{preliminaries}. Here are two ways to understand the claim that a binary formula $\tau$ defines a sequence $(T_n)_{n\in\mathbb{N}}$ of theories.
\begin{enumerate}
\item   Slice encoding: For each $n$, $T_n=\{k : \mathbb{N}\vDash \tau(n,k) \}$.
\item   Index encoding: For each $n$, $k$ is the index of the theory $T_n$, where $k$ is the unique number $m$ such that $\mathbb{N}\vDash \tau(n,m)$.
\end{enumerate}
If one adopts the index encoding, one must then say what it is for each theory to prove the $m$-consistency of the next. Here are two ways to understand this latter claim:
\begin{enumerate}
\item   Uniform: For each $n$, $T_n$ proves the formula $\forall x \big(\tau(n+1,x) \to m\mathsf{Con}(x)\big)$.
\item   Non-uniform: For each $n$ and $k$, if $\mathbb{N}\vDash \tau(n+1,k)$, then $T_n$ proves the formula $m\mathsf{Con}(k)$.
\end{enumerate}

\subsection{A Summary of the Main Results}

The main question of this paper---informally stated---is: 
\begin{question}\label{main-question}
    Under what conditions are there sequences of $m$-consistent theories each of which proves the $m$-consistency of the next?
\end{question} 
We call such a sequence---that is, a sequence of $m$-consistent theories each of which proves the $m$-consistency of the next---\emph{an $m$-sequence}. What exactly an $m$-sequence is depends on whether one encodes sequences via slices or indices and whether one interprets the question uniformly or non-uniformly. This leads to various refinements of Question \ref{main-question}.

Table 1 summarizes the main results of the paper. Some results mention the PPI condition; PPI stands for \emph{pointwise provably inhabited} (PPI). A formula $\tau$ encoding a sequence $(T_n)_{n\in\mathbb{N}}$ is PPI if for each $n$, $T_n\vdash \exists x \tau(n+1,x)$. Note that by a ``positive result'' we mean one establishing the existence of a descending sequence and by a ``negative result'' we mean one barring the existence of descending sequences.

\begin{table}[htbp]
\centering
\setlength{\arrayrulewidth}{0.5mm}
\begin{tabular}{|>{\centering\arraybackslash}m{2.5cm}|m{5cm}|m{5cm}|}
\hline
\textbf{Approach} & \textbf{Positive Results} & \textbf{Negative Results} \\
\hline
\textbf{Slices} &
For all $m\geq 0$, there is a $\Sigma_m$-definable $m$-sequence over $\base$. (Theorem \ref{thm::SigmaDefUniSeq}).
For all $m\geq 1$, there exists a $\Pi_{m-1}$-definable $m$-sequence over $\mathsf{B\Sigma_m}$. (Theorem \ref{craig-trick-theorem})&
For $m\geq 2$, there is no $\Sigma_{m-1}$-definable $m$-sequence over $\mathsf{B\Sigma_1}$. (Theorem \ref{generalization})\\
\hline
\textbf{Indices}

(Uniform) &
For $m\geq 0$, there is a $\Pi_m$-definable uniform $m$-sequence over $\mathsf{B\Sigma_m}$.
 (Theorem \ref{uni-index-pos}) &
    For $m\geq 2$, there is no $\Sigma_m$-definable uniform $m$-sequence over $\mathsf{B\Sigma_1}$.
 (Theorem \ref{new-uni-negative}) \\

\hline
\textbf{Indices}

(Non-uniform) &
For $m\leq 1$, there is a $\Sigma_0$-definable PPI $m$-sequence over $\mathsf{EA}$.  (Theorem \ref{fss-theorem}, \cite[Theorem 3.8]{pakhomov2021reflection}) &
For $m\geq 2$, there is no $\Sigma_m$-definable PPI $m$-sequence over $\mathsf{B\Sigma_1}$. (Theorem \ref{main-negative}) \\
\hline
\end{tabular}
\caption{Summary of Results}
\label{tab:reflection-hierarchies}
\end{table}

In fact, some of the results are established in a more general form. For instance, we prove them not only relative to a fixed base theory (e.g., $\base$) but to all sufficiently sound extensions of that base theory.

Let's make two observations about this table. First, the positive result for slices cannot be improved by showing that there exists a $\Sigma_{m-1}$-definable $m$-sequence. Indeed, in the case $m=2$, the existence of such a sequence would contradict the negative result. So the positive result for slices is optimal. Second, note that the positive result for indices (non-uniform) precludes us from strengthening the negative result to all $m$.

\subsection{Outline of the Paper}

Here is our plan for the rest of the paper. In \textsection \ref{preliminaries} we cover some preliminaries about encodings and reflection principles. In \textsection \ref{uniform-section} we work with the slice encoding, culminating with a proof of Theorem \ref{craig-trick-theorem}. In \textsection \ref{non-uniform-section} we work with the index encoding. We prove Theorem \ref{uni-index-pos}, Theorem \ref{new-uni-negative}, and Theorem \ref{main-negative}.

\section{Preliminaries}

\subsection{Reflection Principles}\label{preliminaries}

Our primary objects of study are reflection principles, which are generalizations of consistency statements. The reflection principles we consider in this paper have various formulations.

\begin{definition}
    A theory $T$ is k-consistent if $T$ is consistent with each $\Pi_k$ truth.
\end{definition}

\begin{definition}
    A theory $T$ is $\Gamma$-sound if every $\Gamma$ theorem of $T$ is true.
\end{definition}

There are many ways of expressing the $\Gamma$-soundness of one theory within another. However, let us make an important note about this. For a theory $T$, any \emph{formal} statement to the effect that $T$ is $\Gamma$-sound will rely on some intensional presentation of the theory $T$. We adopt the policy in this paper that theories \emph{are} elementary formulas defining sets of axioms. That is, theories are individuated in a very fine-grained, syntactic manner. Note that Craig's trick demonstrates that every r.e. set of formulas has an elementary presentation up to logical equivalence.

The complexity classes we are interested in ($\Pi_n$, $\Sigma_n$) have definable truth-predicates within the language of arithmetic. For these complexity classes we may axiomatize the $\Gamma$-soundness of $T$ with a single formula:
$$\mathsf{RFN}_\Gamma(T) := \forall \varphi \in \Gamma\big( \mathsf{Pr}_T(\varphi) \to \mathsf{True}_\Gamma(\varphi)\big).$$
$\mathsf{EA}$ proves that  the single formula $\mathsf{RFN}_\Gamma(T)$ has the same theorems as the schema consisting of all formulas 
$$\forall \vec{x} \big(\mathsf{Pr}_T(\varphi(\vec{x})) \to \varphi(\vec{x})\big)$$
for $\varphi\in \Gamma$. The connection between $k$-consistency and $\Gamma$-soundness is codified in the following proposition:
\begin{proposition}
    Provably in $\mathsf{EA}$, for all $T$ extending $\mathsf{EA}$, the following are equivalent:
    \begin{enumerate}
        \item $T$ is n-consistent.
        \item $T$ is $\Sigma_n$-sound.
        \item $T$ is $\Pi_{n+1}$-sound.
    \end{enumerate}
\end{proposition}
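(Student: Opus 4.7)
The plan is to prove the cycle of implications by formalizing three standard moves inside $\mathsf{EA}$: the duality $\mathsf{True}_{\Sigma_n}(\varphi) \leftrightarrow \neg \mathsf{True}_{\Pi_n}(\neg\varphi)$, the deduction-style equivalence $\mathsf{Pr}_T(\neg\varphi) \leftrightarrow \neg\mathsf{Con}(T+\varphi)$, and the Tarski-style commutation clauses for the partial truth predicates. I would split the argument into the equivalences (2)$\Leftrightarrow$(3) and (1)$\Leftrightarrow$(2).

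For (2)$\Leftrightarrow$(3), I would first observe that $\mathsf{EA}$ proves that every $\Sigma_n$ sentence $\varphi$ is equivalent to the $\Pi_{n+1}$ sentence $\forall y\, \varphi$ (with $y$ not free in $\varphi$), and provability is closed under this equivalence. This gives (3)$\Rightarrow$(2) immediately. For the converse, assume $T$ is $\Sigma_n$-sound and fix a $\Pi_{n+1}$ sentence $\forall x\, \psi(x)$ with $\psi \in \Sigma_n$. Using the fact that $\mathsf{EA}$ proves $\mathsf{Pr}_T(\forall x\, \psi(x)) \to \forall k\, \mathsf{Pr}_T(\psi(\bar k))$, the assumption yields $\forall k\, \mathsf{True}_{\Sigma_n}(\psi(\bar k))$. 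The Tarski clause for universal quantification that $\mathsf{EA}$ proves for the partial truth predicates then gives $\mathsf{True}_{\Pi_{n+1}}(\forall x\, \psi(x))$.

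For (1)$\Leftrightarrow$(2), I would dualize. Suppose $T$ is $n$-consistent and fix $\varphi \in \Sigma_n$ with $\mathsf{Pr}_T(\varphi)$. Then $\neg\mathsf{Con}(T + \neg\varphi)$, and $\neg\varphi$ is (provably equivalent to) a $\Pi_n$ sentence, so the contrapositive of $n$-consistency gives $\neg\mathsf{True}_{\Pi_n}(\neg\varphi)$. The duality between $\mathsf{True}_{\Sigma_n}$ and $\mathsf{True}_{\Pi_n}$ then yields $\mathsf{True}_{\Sigma_n}(\varphi)$, establishing (1)$\Rightarrow$(2). Conversely, assume $T$ is $\Sigma_n$-sound and that $\varphi \in \Pi_n$ is true; if $T + \varphi$ were inconsistent then $T \vdash \neg\varphi$, but $\neg\varphi$ is $\Sigma_n$, so by soundness it would be true, contradicting the truth of $\varphi$.

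The real content here is not logical but metamathematical: I expect the main care point to be verifying that each manipulation is available already in $\mathsf{EA}$, i.e., that the proofs of the Tarski clauses, of the duality between $\mathsf{True}_{\Sigma_n}$ and $\mathsf{True}_{\Pi_n}$, and of the basic closure properties of $\mathsf{Pr}_T$ used above are all formalizable at this base level. Once those are in hand—and they are, since they depend only on bounded computations and elementary syntactic manipulation—each implication above goes through in $\mathsf{EA}$ and the proposition follows.
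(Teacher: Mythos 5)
Your proof is correct, but there is nothing in the paper to compare it against: the paper states this proposition without proof, treating it as folklore (it is a standard fact; see, e.g., Beklemishev's survey on reflection principles). Your decomposition into (2)$\Leftrightarrow$(3) via the dummy-quantifier trick together with $\omega$-extraction of instances from a provable universal, and (1)$\Leftrightarrow$(2) via the deduction-theorem equivalence $\mathsf{Pr}_T(\neg\varphi) \leftrightarrow \neg\mathsf{Con}(T+\varphi)$ and $\Sigma_n/\Pi_n$ duality of the partial truth predicates, is exactly the expected argument. You are also right to flag the formalizability over $\mathsf{EA}$ as the only genuine care point; the uniform Tarski commutation clauses and the cross-level compatibility of the partial truth predicates are indeed available at that level, so the argument goes through as you describe.
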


Of course, it is impossible to express the \emph{global} reflection principle of a theory $T$---according to which every theorem of $T$ is true---without an extra truth-predicate. In this paper we will at some points need to discuss the following schematic approximation of the global reflection principle:
\begin{definition}
The \emph{uniform reflection schema} $\mathsf{RFN}(T)$ for $T$ consists of all formulas
$$\forall \vec{x} \big(\mathsf{Pr}_T(\varphi(\vec{x})) \to \varphi(\vec{x})\big)$$
where $\varphi$ is a formula in the language of $\mathsf{EA}$.
\end{definition}

Before continuing, we record a trivial lemma.

\begin{lemma}\label{trivial-lemma}
Suppose that $T$ is $m$-consistent. Then $T$ is consistent with any true $\Sigma_{m+1}$ sentence.
\end{lemma}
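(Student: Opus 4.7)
The plan is to reduce the $\Sigma_{m+1}$ case to the defining $\Pi_m$ case by witnessing the leading existential quantifier in the standard model. Let $\sigma$ be a true $\Sigma_{m+1}$ sentence, and write it (up to logical equivalence) as $\exists x \, \pi(x)$ with $\pi(x) \in \Pi_m$. Since $\sigma$ is true in $\mathbb{N}$, there is some $n \in \mathbb{N}$ such that $\pi(\bar{n})$ is true, where $\bar{n}$ is the numeral for $n$. After substitution of the numeral, $\pi(\bar{n})$ is still a $\Pi_m$ formula (numerals are closed $\Delta_0$ terms, and substituting them does not raise quantifier complexity), so it is a true $\Pi_m$ sentence.

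By the definition of $m$-consistency, $T$ is consistent with every true $\Pi_m$ sentence; in particular $T + \pi(\bar{n})$ is consistent. Since $\pi(\bar{n}) \vdash \exists x \, \pi(x)$ in pure first-order logic, any derivation of a contradiction from $T + \sigma$ would give a derivation of a contradiction from $T + \pi(\bar{n})$. Hence $T + \sigma$ is consistent, as required.

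I anticipate no real obstacle: the only bookkeeping point is to note that numeral-substitution preserves the class $\Pi_m$, and this is standard. This matches the text's labelling of the statement as a ``trivial lemma.''
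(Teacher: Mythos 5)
Your proof is correct and follows essentially the same route as the paper's: witness the leading existential quantifier of the true $\Sigma_{m+1}$ sentence to obtain a true $\Pi_m$ instance, invoke $m$-consistency for that instance, and observe that the instance implies the original sentence. The only difference is that you spell out the bookkeeping (numeral substitution preserving $\Pi_m$, the contrapositive on derivations of $\bot$) that the paper treats as implicit.
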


\begin{proof}
    Suppose that $T$ is $m$-consistent. Now let $\varphi \in \Sigma_{m+1}$ be true. Then $\varphi$ has the form $\exists x \pi(x)$ where $\pi(x)$ is a $\Pi_m$ formula. Since $\varphi$ is true, then $\pi(n)$ is true for some $n$. Since $T$ is $m$-consistent, $T+\pi(n)$ is consistent. But $\pi(n)$ implies $\varphi$, so $T+\varphi$ is consistent. 
\end{proof}

We will also need to deal with theories that are defined by \emph{iterating} reflection principles.
\begin{definition}
    \label{iterated-reflection}
The iterated reflection principle $m\mathsf{Con}^{k+1}(T)$ is the statement $m\mathsf{Con}(T+m\mathsf{Con}^k(T))$.
\end{definition}

\subsection{The Index Encoding}

First, we consider versions of the problem wherein a sequence of r.e.\ theories is encoded as a sequence of indices of Turing machines numerating those theories. To state this version, we introduce some terminology. First, we must say it is for a formula to define a sequence.
\begin{definition}
    A binary formula $\tau$ \emph{defines an index sequence} if there is a sequence $(T_n)_{n\in\mathbb{N}}$ of r.e.\ theories such that, for each $n$, $k$ is the index of the theory $T_n$, where $k$ is the unique number $m$ such that $\mathbb{N}\vDash \tau(n,m)$.\footnote{There are various ways of understanding what an index of a recursively axiomatized theory is, but they will not matter for us. An index could be an index of a Turing machine that numerates the axioms of the theory. It could be a $\Delta_0$ formula that bi-numerates the axioms of the theory.}
\end{definition}
That is, a formula defines an index sequence just in case it defines a sequence of indices of theories.

\begin{remark}
    Note that $T_n$ may be recursively axiomatized even when the formula $\tau(n,x)$ has high quantifier complexity. For instance, $\tau$ may define a sequence of r.e.\ theories even if each formula $\tau(n,x)$ is not $\Sigma_1$.
\end{remark}

We are particularly interested in sequences of theories meeting a certain constraint, namely:
\begin{center}
    $(\star)$\quad Each entry proves a reflection principle for the next theory.
\end{center}
Given the index encoding of theories, there are multiple ways of interpreting the constraint $(\star)$. 

\subsubsection{A Uniform Interpretation}

We first provide a \emph{uniform} interpretation of constraint $(\star)$. According to the uniform interpretation, the $n$th theory in the sequence defined by $\tau$ must prove the claim:
$$\forall x \big( \tau(n+1,x) \to m\mathsf{Con}(x)\big)$$
where $m\mathsf{Con}$ is the relevant reflection principle. That is, each theory proves reflection for the next by recourse to the formula $\tau$ defining the sequence. 

\begin{definition}
    Let $(T_n)_{n\in\mathbb{N}}$ witness that $\tau$ defines an index sequence. We say that $\tau$ is a \emph{uniform $m$-sequence over $T$} if each of the following holds:
    \begin{enumerate}
        \item Each $T_n$ is an $m$-consistent r.e. extension of $T$.
        \item For each $n$, $T_n\vdash \exists x \; \tau(n+1,x)$.
        \item For each $n$, $T_n\vdash \forall x \big( \tau(n+1,x) \to m\mathsf{Con}(x)\big)$.
    \end{enumerate}
\end{definition}

Our first refinement of Question \ref{main-question} concerns the existence of uniform $m$-sequences, where sequences are encoded as index sequences.

\begin{question}\label{universal-question}
    Adopt the index approach to encoding sequences. For which choices of $\langle T,\Gamma,m\rangle$ is there a $\Gamma$-definable uniform $m$-sequence over $T$?
\end{question}

Theorem \ref{original-descending} yields a negative answer for the choice $\langle \mathsf{B\Sigma_1}, \Sigma_1, 2\rangle$. A generalization of this result shows that, given the index approach to encoding sequences, for each $m\geq 2$, there is no $\Sigma_m$-definable uniform $m$-sequence over $\mathsf{B\Sigma_1}$ (Theorem \ref{new-uni-negative}).

\subsubsection{A Non-uniform Interpretation} 

We now describe a \emph{non-uniform} interpretation of constraint $(\star)$. According to the non-uniform interpretation,the $n$th theory in the sequence defined by $\tau$ must prove the claim:
$$ m\mathsf{Con}(k)$$
where $\mathbb{N}\vDash \tau(n+1,k)$. That is, each theory proves reflection for the next index \emph{directly}, without recourse to the formula $\tau$ defining the sequence.

\begin{definition}
         Let $(T_n)_{n\in\mathbb{N}}$ witness that $\tau$ defines an index sequence. We say that $\tau$ is a \emph{non-uniform $m$-sequence over $T$} if each of the following holds:
        \begin{enumerate}
            \item Each $T_n$ is an $m$-consistent r.e.\ extension of $T$.
            \item For each $n$, for the unique $k$ such that $\mathbb{N}\vDash \tau(n+1,k)$, $T_n\vdash m\mathsf{Con}(k).$
        \end{enumerate}
\end{definition}

Using this terminology, we state the non-uniform version of the question as follows:
\begin{question}\label{non-uniform-question}
    Adopt the index approach to encoding sequences. For which choices of $\langle T,\Gamma,m\rangle$ is there a $\Gamma$-definable non-uniform $m$-sequence over $T$?
\end{question}

\begin{remark}
    Note the distinction between this question and the previous one. Suppose that there is a $\Gamma$-definable non-uniform 2-sequence. This means that some $\Gamma$ formula $\tau$ defines a sequence $(\tau_n)_{n\in\mathbb{N}}$ of indices of 2-consistent extensions $(T_n)_{n\in\mathbb{N}}$ of $\mathsf{B\Sigma_2}$ such that for each $n\in\mathbb{N}$, $T_n\vdash  2\mathsf{Con}(k)$, where $k$ is the index of $T_{n+1}$. Note that the statement $2\mathsf{Con}(k)$ that $T_n$ proves might not make any use of the formula $\tau$ that defines the sequence. This is why we call this version of the problem \emph{non-uniform}.
\end{remark}

The distinction between the uniform and non-uniform versions of the problems matters only when we consider non-r.e.\ sequences. Hence, Theorem \ref{original-descending} again yields a negative answer to Question \ref{non-uniform-question} for the choice $\langle \mathsf{B\Sigma_1}, \Sigma_1, 2\rangle$.

For the non-uniform version of the question, we prove a strong analogue of the H.\ Friedman, Smory\'nski, Solovay theorem. In particular, we answer this question in the special case of sequences that are {pointwise provably inhabited}.

\begin{definition}
     We say that a sequence $\sigma$ enoding $(T_n)_{n\in\omega}$ is {pointwise provably inhabited} if for each $n\in\mathbb{N}$, $T_n\vdash \exists x \sigma(n+1,x)$.
\end{definition}

One of our main theorems (Theorem \ref{main-negative}) shows that, given the index approach to encoding sequences, for each $m$, there is no $\Sigma_m$-definable non-uniform $m$-sequence that is pointwise provably inhabited. Note that this result does not \emph{merely} banish provably-\emph{descending} non-uniform 2-sequences but banishes all pointwise provably \emph{inhabited} non-uniform 2-sequences.

\subsection{The Slice Encoding}

In the previous subsection we encoded a sequence of theories as a sequence of indices numerating those theories. There is another common approach to encoding sequences, namely, to encode each theory in the slices of a binary relation.
\begin{definition}
    A binary formula $\tau$ \emph{defines a slice sequence} if there is a sequence $(T_n)_{n\in\mathbb{N}}$ such that for every $n\in\mathbb{N}$, $T_n=\{k : \mathbb{N}\vDash \tau(n,k) \}$.
\end{definition}

If we adopt the slice encoding, then there is a natural uniform version of our problem, but there is no clear natural non-uniform version. Let's introduce some terminology needed to state the uniform problem.

\begin{definition}
         Let $(T_n)_{n\in\mathbb{N}}$ witness that $\tau$ defines a slice sequence. We say that $\tau$ is a \emph{uniform $m$-sequence over $T$} if each of the following holds:
        \begin{enumerate}
            \item  Each $T_n$ is a $m$-consistent r.e.\ extension of $T$.
            \item For each $n$, $T_n\vdash \exists x \tau(n+1,x)$
            \item For each $n$: 
    $$T_n\vdash \forall x \forall y \Big( x=\ulcorner \bigwedge \{ \varphi_ z \mid z\leq y \wedge  \tau(n+1,z) \}\urcorner \to m\mathsf{Con}(x)   \Big).$$
        \end{enumerate}
\end{definition}

\begin{remark}
    One might wonder why we demand that $T_n$ proves $m\mathsf{Con}$ of all conjunctions of this form. The issue is that the $\tau$ might numerate formulas in such a way that each $\tau(n+1,k)$ is $m$-consistent yet their union is jointly $m$-inconsistent. Of course, any failure of $m$-consistency is witnessed by some finite conjunction of formulas. Hence, to prove that the theory numerated by $\tau(n+1,\cdot)$ is $m$-consistent, it suffices to show that for every $k$, the conjunction:
$$\bigwedge \{ \varphi_ z \mid z\leq k \wedge  \tau(n+1,z) \}$$
is $m$-consistent.
\end{remark}

This leads to one more version of our motivating question.

\begin{question}\label{slice-question}
    Adopt the slice approach to encoding sequences. For which choices of $\langle T,\Gamma,m\rangle$ is there a $\Gamma$-definable uniform $m$-sequence over $T$?
\end{question}

Once again, Theorem \ref{original-descending} shows that there is no $\Sigma_1$-definable uniform $2$-sequence over $\mathsf{B\Sigma_1}$. One of our main results (Theorem \ref{craig-trick-theorem}) provides a counter-point to this by producing descending sequences that are not much more complicated. In particular, if we adopt the slice approach to encoding sequences, then there is a $\Pi_{m}$-definable uniform $m+1$-sequence over $\mathsf{B\Sigma_{m+1}}$.

%%%%%%%%%%%%%%%%%%%%%%%%%%%%%%%%%%%%%%%%%%%%%%%%%%%%%%%%%%%%%%%%
%%%%%%%%%%%%%%%%%%%%%%%%%%%%%%%%%%%%%%%%%%%%%%%%%%%%%%%%%%%%%%%%
%%%%%%%%%%%%%%%%%%%%%%%%%%%%%%%%%%%%%%%%%%%%%%%%%%%%%%%%%%%%%%%%

\section{The Slice Encoding}\label{uniform-section}

In this section we adopt the slice encoding. The ultimate goal of this section is to show that there is a $\Pi_{m}$-definable uniform $m+1$-sequence over $\mathsf{B\Sigma_{m+1}}$. The proof of this result requires a few tricks, so we build up to it in stages. We will prove a few weak versions of this theorem first since their proofs are easier to understand. Afterward we will use some tricks to modify these into a proof of the desired result.

Note that all theorems stated in this section are stated relative to the slice encoding of sequences.

\subsection{Visser's Technique}

The proof of this section's main theorem deploys a technique due to Albert Visser \cite{visser1988descending}. Visser defines a recursive sequence $(T_n)_{n\in\mathbb{N}}$ of consistent extensions of $\mathsf{PA}$ such that for each $n\in\mathbb{N}$, $T_n$ proves every instance of uniform reflection for $T_{n+1}$. The theories that Visser produces are not $1$-consistent, so this result does not contradict Theorem \ref{original-descending}.

We reproduce Visser's theorem here with a slightly different proof (in particular, we use self-reference instead of the recursion-theoretic fixed point theorem). Whereas the original proof uses the recursion theorem, we use self-reference in arithmetic. Moreover, instead of $\mathsf{PA}$ we change the base system to $\mathsf{B\Sigma_1}$.

 \begin{theorem}
     There is a recursive sequence $(T_n)_{n\in\mathbb{N}}$ of consistent extensions of $\mathsf{B\Sigma_1}$ such that for each $n\in\mathbb{N}$, $T_n$ proves every instance of the uniform reflection schema for $T_{n+1}$.
 \end{theorem}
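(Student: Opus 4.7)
The plan is to construct the sequence via simultaneous self-reference in arithmetic, mimicking a Lindenbaum-style extension that adds reflection instances one at a time subject to a bounded-search consistency check. Using G\"odel's diagonal lemma with the parameter $n$, I would introduce a primitive recursive formula $\tau(n,x)$ such that $\tau(n,\cdot)$ enumerates the axioms of $T_n$ as follows. Fix an enumeration $(\varphi_i)_{i\in\mathbb{N}}$ of all formulas of arithmetic, and for each $n,i$ let
$$\rho^{n+1}_i := \forall \vec{x}\bigl(\mathsf{Pr}_{T_{n+1}}(\varphi_i(\vec{x})) \to \varphi_i(\vec{x})\bigr),$$
where $\mathsf{Pr}_{T_{n+1}}$ is expressed via $\tau(n+1,\cdot)$. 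The axioms of $T_n$ are all axioms of $\mathsf{B\Sigma_1}$, together with those $\rho^{n+1}_i$ for which $\mathsf{B\Sigma_1}+\{\rho^{n+1}_j : j<i\}+\rho^{n+1}_i$ admits no proof of $\bot$ of G\"odel code $\leq i$. This is a primitive recursive condition (relative to $\tau$), and the diagonal lemma closes the circular reference to $\tau(n+1,\cdot)$.

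Once the sequence is defined, three tasks remain: (a) verifying that each $T_n$ is a recursive extension of $\mathsf{B\Sigma_1}$, which is immediate from the construction; (b) showing that each $T_n$ is consistent; and (c) showing that for every formula $\varphi_i$, the instance $\rho^{n+1}_i$ is actually added, i.e.\ the bounded-search consistency check always succeeds. Claims (b) and (c) must be proved jointly, and together they imply that $T_n$ derives every uniform reflection instance for $T_{n+1}$, as required.

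The main obstacle is establishing (b) and (c). The strategy is a propagation argument by contradiction: if some $\rho^{n+1}_i$ is rejected, then this rejection is witnessed by a short proof of $\neg\rho^{n+1}_i$, i.e.\ a derivation of $\exists \vec{x}\bigl(\mathsf{Pr}_{T_{n+1}}(\varphi_i(\vec{x})) \wedge \neg\varphi_i(\vec{x})\bigr)$ from $\mathsf{B\Sigma_1}$ and finitely many previously accepted reflection instances. Using the structure of this proof together with the reflection instances available in $T_{n+1}$, one lifts the failure upward to a rejection of some $\rho^{n+2}_j$, and iterating yields an ascending chain of failures. The key point is that this lifting must terminate: the bounded-search cap $i$ in the consistency check limits how many reflection instances can be invoked along the chain, so after finitely many steps one would obtain a short proof of $\bot$ from $\mathsf{B\Sigma_1}$ alone, contradicting its consistency. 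It is precisely at this step that the argument would break if we demanded $1$-consistency of the $T_n$, consistent with the fact that Theorem \ref{original-descending} forbids any such strengthening.
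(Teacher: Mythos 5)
Your approach is genuinely different from the paper's, and unfortunately the core of your consistency argument has a gap that I don't see how to close. The paper (following Visser) does not attempt any Lindenbaum-style bounded filtering: instead it conditions the entire definition on whether a $\mathsf{ZF}$-proof of $\bot$ with code $\leq n$ exists, so that under the hypothesis $\neg\mathsf{Con}(\mathsf{ZF})$ the sequence provably collapses at some finite level $N$ to $T_N=\mathsf{B\Sigma_1}$, making $T_0$ a \emph{finite} iterate $\mathsf{RFN}^{N}(\mathsf{B\Sigma_1})$, which $\mathsf{ZF}$ can see is consistent. That shows $\mathsf{ZF}\vdash \neg\mathsf{Con}(T_0)\to\mathsf{Con}(\mathsf{ZF})$, and then G\"odel's second incompleteness theorem, together with $\Sigma_1$-completeness of $\mathsf{ZF}$, yields $\mathsf{Con}(T_0)$. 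G\"odel II is the engine of the whole argument.

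Your proposal invokes no incompleteness theorem and instead tries to force consistency by a bounded consistency check inside the fixed-point definition, then verify consistency by a ``propagation of failures.'' Two specific steps are unsupported. First, the lifting step: a short proof of $\bot$ from $\mathsf{B\Sigma_1}$ plus a finite set of $\rho^{n+1}_j$'s does \emph{not} in any evident way produce a rejection of some $\rho^{n+2}_j$ --- the $\rho^{n+1}_j$'s are reflection instances \emph{about} $T_{n+1}$, and nothing about a derivation of $\bot$ from them reaches inside $T_{n+1}$'s own definition to find a failing bounded check at level $n+2$. You would need to extract, from an arbitrary short derivation of $\bot$, structural information about which axioms of $T_{n+1}$ were ``used'' by the provability predicate, and that is not how $\mathsf{Pr}_{T_{n+1}}$ behaves. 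Second, the termination claim: even granting a lift, there is no reason the chain of rejection indices is nonincreasing or bounded; G\"odel codes of the instances $\rho^{n+k}_j$ grow with $n$, so the caps grow too, and the chain can simply run off to infinity without ever producing a proof of $\bot$ from $\mathsf{B\Sigma_1}$ alone. Without some external anchor (this is exactly the role $\neg\mathsf{Con}(\mathsf{ZF})$ plays in the paper), the self-referential fixed point has nothing preventing it from being the ``bad'' one in which all bounded checks pass yet the limit theory $\mathsf{B\Sigma_1}+\mathsf{RFN}(T_{n+1})$ is inconsistent --- precisely the L\"ob/G\"odel-II obstruction your construction needs to, but does not, rule out.
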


\begin{proof}
Using the G\"odel--Carnap self-reference lemma, we get a sequence $(T_n)_{n<\omega}$ of theories such that 
{\small \begin{equation*}
T_n = \left\{
        \begin{array}{ll}
            \mathsf{RFN}(\mathsf{B\Sigma_1}+T_{n+1}) & \quad \forall k \leq n \; \neg \mathsf{Prf}_\mathsf{ZF}(k,\bot) \\
            \mathsf{B\Sigma_1} & \quad \text{otherwise}.
        \end{array}
    \right.
\end{equation*}}

More formally, we get a binary formula $T$ such that $\mathsf{B\Sigma_1}$ proves $T(n,x)$ iff $$x = \ulcorner\mathsf{B\Sigma_1}\urcorner \vee (\forall k \leq n \; \neg \mathsf{Prf}_\mathsf{ZF}(k,\bot) \wedge x \in \mathsf{RFN}(\{y:T(n-1,y)\})).$$
Informally speaking, we think of $T(n,x)$ as saying ``$x$ is an axiom of theory $T_n$.''

\emph{Reason in $\mathsf{ZF}$:} Suppose $\neg \mathsf{Con}(\mathsf{ZF})$. Let $n$ be the least proof of $\bot$ in $\mathsf{ZF}$. Then $T_{n+1}=\mathsf{B\Sigma_1}$. So $T_0=\mathsf{RFN}^n(\mathsf{B\Sigma_1})$. The latter theory is consistent. So $\mathsf{Con}(T_0).$

This argument shows that $\mathsf{ZF}$ proves $\neg \mathsf{Con}(\mathsf{ZF})$ implies $\mathsf{Con}(T_0)$. So $\mathsf{ZF}$ proves $\neg \mathsf{Con}(T_0)$ implies $\mathsf{Con}(\mathsf{ZF})$. So $\mathsf{ZF}$ does not prove $\neg \mathsf{Con}(T_0)$ by G\"odel's second incompleteness theorem. But $\neg \mathsf{Con}(T_0)$ is $\Sigma_1$ and $\mathsf{ZF}$ proves all $\Sigma_1$ truths. So we can infer that $\mathsf{Con}(T_0)$ holds. An easy induction then shows that each $T_n$ is consistent. $T_0$ is consistent, which takes care of the base case. Assume $T_n$ is consistent. Since $T_n$ proves each instance of uniform reflection for $T_{n+1}$ it proves, in particular, $\mathsf{Pr}_{T_{n+1}}(\bot)\to \bot$, i.e., $\mathsf{Con}(T_{n+1})$. But every $\Pi_1$ consequence of a consistent theory is true, so $T_{n+1}$ is consistent.
\end{proof}

\subsection{The General Case}

Visser's proof  does not yield a 2-sequence, or even a 1-sequence, since the theories Visser defines are not 1-consistent. In this subsection we modify Visser's proof to produce a $\Sigma_m$-definable $m$-sequence over $\base$. Lowering the complexity of the sequence to $\Pi_{m-1}$ requires some additional tricks, so we defer it to the next subsection.

Note that in the statement of the theorem---as well as in the proof---we refer to iterated reflection principles. For the definition of iterated reflection see Definition \ref{iterated-reflection}.

\begin{theorem}\label{thm::SigmaDefUniSeq}
      Let $m\geq 0$. Let $T$ be an r.e.\ extension of $\base$ such that $\mathsf{I\Sigma}_m + \forall x \, m\mathsf{Con}^{x}(T)$ is $m$-consistent. Then there is a $\Sigma_m$-definable uniform $m$-sequence over $T$.
\end{theorem}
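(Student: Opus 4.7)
The strategy adapts Visser's self-reference construction from the preceding subsection in two key ways: (i) swap the auxiliary theory $\mathsf{ZF}$ for $S := \mathsf{I\Sigma}_m + \forall x\, m\mathsf{Con}^x(T)$, which is $m$-consistent by hypothesis; and (ii) replace the collapse target $\mathsf{B\Sigma_1}$ by an iterate $T + m\mathsf{Con}^k(T)$, whose $m$-consistency is guaranteed by the axiom scheme of $S$. Concretely, apply the G\"odel--Carnap self-reference lemma to obtain a formula $\tau(n,x)$ such that, provably in $\base$,
\[
\tau(n,x) \leftrightarrow \mathrm{Ax}_T(x) \vee \bigl(\mathrm{Safe}(n) \wedge x = \ulcorner m\mathsf{Con}(\tau_{n+1}) \urcorner\bigr),
\]
where $\tau_{n+1}$ denotes $\{y : \tau(n+1,y)\}$ and $\mathrm{Safe}(n)$ is a safety predicate of the Visser form ``$S$ has not refuted itself by stage $n$,'' formalized so as to keep $\tau$ within $\Sigma_m$. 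Since $S$ is $m$-consistent, the safety holds at every standard $n$, so externally each $T_n$ is $T + \{m\mathsf{Con}(T_{n+1})\}$.

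Conditions (2) and (3) of the uniform $m$-sequence definition are immediate from the form of $\tau$. Condition (2) holds because the axioms of $T$ always populate $\tau(n+1,\cdot)$. For condition (3), the safe branch equips $T_n$ with the single axiom $m\mathsf{Con}(\tau_{n+1})$, and $m$-consistency of $\tau_{n+1}$ provably implies $m$-consistency of each of its finite subtheories.

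The main work is condition (1): each $T_n$ is $m$-consistent. Reasoning within $S$, if $\mathrm{Safe}$ ever fails at a least stage $n_0$, then unrolling the safe-branch definition from $n_0$ back to $0$ identifies $T_0$ with $T + m\mathsf{Con}^{n_0}(T)$, whose $m$-consistency is an instance of $\forall x\, m\mathsf{Con}^x(T) \in S$. Formalizing gives $S \vdash \neg \mathsf{Con}(S) \to m\mathsf{Con}(T_n)$ for each standard $n$.

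The main obstacle is passing from this implication to the external truth of $m\mathsf{Con}(T_n)$. Visser's argument in the pure-consistency case exploits $\Sigma_1$-completeness to jump from ``$\mathsf{ZF} \not\vdash \neg\mathsf{Con}(T_n)$'' to ``$\mathsf{Con}(T_n)$ holds,'' but $\neg m\mathsf{Con}(T_n)$ lives at $\Sigma_{m+1}$ and no analogous $\Sigma_{m+1}$-completeness is available in $S$. The plan is to replace this final step with a model-theoretic argument: take $\mathcal{M} \models S + (\text{all true } \Pi_m)$ supplied by the hypothesis, verify that $\mathcal{M}$'s interpretation of $\tau_n$ agrees on standard codes with the external one (because $\mathcal{M}$ satisfies the true $\Pi_1$ sentence $\mathsf{Con}(S)$, so safety holds throughout $\mathcal{M}$), and use $\mathsf{I\Sigma}_m$ together with $\forall x\, m\mathsf{Con}^x(T)$ inside $\mathcal{M}$ to establish $\mathcal{M} \models m\mathsf{Con}(T_n)$. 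External $m$-consistency of $T_n$ then follows because $\mathcal{M}$ realizes every true $\Pi_m$ sentence and so witnesses $m$-consistency of $T_n$'s standard axiom set.
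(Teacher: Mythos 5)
Your overall architecture matches the paper's: apply self-reference to a $T$-relative, $m$-consistency-aware variant of Visser's construction, with a safety predicate guarding the descending branch. You also correctly locate the crux: the $\Sigma_1$-completeness jump in Visser's pure-consistency argument has no $\Sigma_{m+1}$ analogue, so the final step that converts an $S$-internal implication into the external truth of $m\mathsf{Con}(\tau_0)$ must be replaced.

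However, the replacement you propose does not work as sketched. You pick $\mathcal{M} \models S + (\text{all true }\Pi_m)$, observe that safety (formulated $\mathsf{Con}$-style) holds throughout $\mathcal{M}$, and then assert that $\mathsf{I\Sigma}_m$ plus $\forall x\,m\mathsf{Con}^x(T)$ inside $\mathcal{M}$ yield $\mathcal{M} \models m\mathsf{Con}(\tau_0)$. This is precisely the step you have not supplied. The whole mechanism of the Visser argument is that \emph{failure} of safety at a least stage $n_0$ lets you unroll $\tau_0$ to the genuine iterate $T + m\mathsf{Con}^{n_0}(T)$, to which the scheme $\forall x\,m\mathsf{Con}^x(T)$ applies. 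When safety holds at every $n \in M$, the unrolling never bottoms out: inside $\mathcal{M}$, $\tau_n$ is $T + \{m\mathsf{Con}(\tau_{n+1})\}$ for every $n$, and the axiom $\forall x\,m\mathsf{Con}^x(T)$ says nothing about such a circularly presented theory. You would have to derive $\mathcal{M}\models m\mathsf{Con}(\tau_0)$ from something, and what you have offered is exactly what you are trying to show. (There is a further, secondary worry: even granting $\mathcal{M}\models m\mathsf{Con}(\tau_0)$, transferring external $m$-consistency of $\tau_0$ needs care, since $\mathcal{M}$ only reflects true $\Pi_m$ sentences, and the membership formula $\tau(0,\cdot)$ is $\Sigma_m$, so its standard-part interpretation in $\mathcal{M}$ need not coincide with the real $\tau_0$.)

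Two specific design choices in the paper's proof are what make the closing step go through, and both differ from your setup. First, the safety predicate is not pure consistency of $S$ but $m$-consistency of $S$, and the trick is to phrase it as a $\Sigma_m$ formula by pulling a Skolem-style existential quantifier for the $\Pi_{m-1}$ witnesses out in front of the bounded universals. With this, the unrolling argument performed inside $S = \mathsf{m}^{\omega}T$ yields $S \vdash \neg m\mathsf{Con}(S) \to m\mathsf{Con}(\tau_0)$; a $\mathsf{Con}$-based safety predicate would only give $S \vdash \neg\mathsf{Con}(S) \to m\mathsf{Con}(\tau_0)$, which is too weak. Second, the jump to external truth is not model-theoretic but proof-theoretic: contrapose to get $S \vdash \neg m\mathsf{Con}(\tau_0) \to m\mathsf{Con}(S)$, observe that $\neg m\mathsf{Con}(\tau_0)$ is $\Sigma_{m+1}$, and invoke the generalized second incompleteness theorem (no consistent extension of an r.e.\ theory by true $\Sigma_{m+1}$ sentences proves its own $m$-consistency) to conclude that $S + \neg m\mathsf{Con}(\tau_0)$ is inconsistent, i.e.\ $S \vdash m\mathsf{Con}(\tau_0)$. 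Then $m$-consistency of $S$ (which is $\Pi_{m+1}$-soundness) delivers that $\tau_0$ is indeed $m$-consistent. Your proposal needs to be repaired by adopting the $m$-consistency-flavoured safety formula and closing with this generalized G\"odel step rather than the model-theoretic argument.
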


\begin{remark}
    The statement of Theorem \ref{thm::SigmaDefUniSeq} might appear somewhat technical. Let us emphasize that every sound theory $T$ satisfies the condition that $\mathsf{I\Sigma}_m + \forall x \, m\mathsf{Con}^{x}(T)$ is $m$-consistent. We state this theorem in this way so that we precisely calibrate the assumptions required for the proof to work. If we stated the result for all sound theories, the reader might wonder what role soundness plays in the proof. In fact, only this condition---that $\mathsf{I\Sigma}_m + \forall x \, m\mathsf{Con}^{x}(T)$ is $m$-consistent---is deployed in the proof.
\end{remark}

\begin{proof}
Note that the case $m=0$ is already covered by \ref{fss-theorem}.

Let $\mathsf{m}^{\omega}T$ abbreviate $\mathsf{I\Sigma}_m + \forall x \,m\mathsf{Con}^x(T)$. We want a sequence $(\tau_n)_{n<\omega}$ of theories such that:
{\footnotesize \begin{equation*}
\tau_n = \left\{
        \begin{array}{ll}
            T+m\mathsf{Con}(\base+\tau_{n+1}) & \quad \forall k\leq n \; \forall A\leq n \Big( A\in\Sigma_m \wedge \mathsf{Prf}_{\mathsf{m}^{\omega}T}(k,A) \to  \mathsf{True}_{\Sigma_{m}}(A)\Big) \\
            T & \quad \text{otherwise}
        \end{array}
    \right.
\end{equation*}}

Again, we use the self-reference lemma. There is a binary formula $\tau$ such that: $\base$ proves that $\tau(n,\varphi)$ is equivalent to the disjunction of the following:
\begin{enumerate}
\item $\varphi\in T$
\item both of the following hold:
\begin{enumerate}
    \item $\exists s\forall k\leq n \; \forall A\leq n \Big( A\in\Pi_{m-1} \wedge \mathsf{Prf}_{\mathsf{m}^{\omega}T}(k,\exists x A) \to  \mathsf{True}_{\Pi_{m-1}}(A(s_k))\Big)$
    \item $\varphi = \ulcorner m\mathsf{Con}(\tau_{n+1})\urcorner$
    \end{enumerate}
\end{enumerate}

Note that (2a) is slightly different from the formula that appeared in the initial piecewise definition of $T_n$. The reason for the change is this: Since the existential quantifier in the initial piecewise definition occurs in the scope of bounded universal quantifiers, $\mathsf{B\Sigma_m}$ is required to transform that formula into a $\Sigma_m$ condition. By contrast (2a) places an existential quantifier in front of the bounded universal quantifiers, so strong bounding axioms are no longer required to transform this formula into a $\Sigma_m$ formula. This existential quantifier pronounces the existence of a sequence of potential witnesses to the truth of a $\mathsf{I\Sigma}_m + \forall x \,m\mathsf{Con}^x(T)$-provable $\Pi_{m-1}$ formula. Note that this is analogous to the quantifier transformations engendered by the Axiom of Choice in set theory and second-order arithmetic.

Let's explicate (2b) in more formal terms.
$m\mathsf{Con}(\tau_{n+1})$ is the formula:
$$ \forall \psi\in \Pi_m\big( \mathsf{True}_{\Pi_m}(\psi) \to \mathsf{Con}(\tau_{n+1}+\psi)\big). $$
Where $\mathsf{Con}(\tau_{n+1}+\psi)$ is an abbreviation for:
    $$\forall x \forall y \Big( x=\ulcorner \bigwedge \{ \varphi_ z \wedge \psi \mid z\leq y \wedge  \tau(n+1,z) \}\urcorner \to \mathsf{Con}(x)   \Big).$$

\begin{claim}
    $\tau$ is $\base$-equivalent to a $\Sigma_m$ formula.
\end{claim}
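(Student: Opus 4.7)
The plan is to verify by inspection that the disjunctive right-hand side of the $\base$-provable equivalence defining $\tau$ is already (equivalent to) a $\Sigma_m$ formula, so that $\tau$ itself is $\base$-equivalent to a $\Sigma_m$ formula. I would treat the disjuncts one at a time and then combine. The case $m=0$ is covered by Theorem \ref{fss-theorem}, so I assume $m\geq 1$ throughout.

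Disjunct (1), $\varphi\in T$, is $\Sigma_1$ since $T$ is r.e., hence $\Sigma_m$. Disjunct (2b), $\varphi=\ulcorner m\mathsf{Con}(\tau_{n+1})\urcorner$, is $\Delta_0$ because the G\"odel number of $m\mathsf{Con}(\tau_{n+1})$ is an elementary function of $n$: even though this code mentions $\tau$ via the fixed-point construction, for each $n$ it is a specific numeral computable from the arithmetization of $\tau$ itself. So (2b) contributes nothing to the complexity.

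The substantive step is analyzing (2a). Reading its matrix from the inside out: the antecedent $A\in\Pi_{m-1}\wedge\mathsf{Prf}_{\mathsf{m}^{\omega}T}(k,\exists x A)$ is $\Delta_0$ and the consequent $\mathsf{True}_{\Pi_{m-1}}(A(s_k))$ is $\Pi_{m-1}$, so the implication is $\Pi_{m-1}$ (using $\Pi_0=\Delta_0$ when $m=1$). The bounded universal quantifiers $\forall k\leq n\,\forall A\leq n$ can be merged with the leading $\forall$ of the $\Pi_{m-1}$ matrix via pairing, which is a routine coding move available in $\base$; the resulting formula is again $\Pi_{m-1}$. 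Prepending the unbounded existential $\exists s$ then produces a $\Sigma_m$ formula.

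The main point to stress — and the whole reason for the reformulation the authors make between the piecewise definition and (2a) — is the interaction between bounded universal quantifiers and an existential prefix. Had we retained the original $\forall k\leq n\,\forall A\leq n\bigl(\ldots\to\mathsf{True}_{\Sigma_m}(A)\bigr)$, the bounded quantifiers would stand in front of a $\Sigma_m$ matrix whose leading quantifier is $\exists$, and pulling that $\exists$ out through the bounded $\forall$s would require $\mathsf{B}\Sigma_m$-collection, which is not available in $\base$. By Skolemizing, i.e.\ pulling $\exists s$ to the very front so that the witnesses $s_k$ live in a single sequence, the matrix now begins with $\forall$, and merging with the bounded quantifiers needs only pairing. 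Combining the disjuncts, $\tau(n,\varphi)$ is a disjunction of a $\Sigma_1$ formula with the conjunction of a $\Sigma_m$ formula and a $\Delta_0$ formula, which is $\Sigma_m$, completing the argument.
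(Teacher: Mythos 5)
Your argument is correct and takes essentially the same approach as the paper: the paper's proof is the one-line observation that $\tau$ results from applying the Diagonal Lemma to a $\Sigma_m$ formula (and numeral substitution preserves complexity), while you spell out the disjunct-by-disjunct complexity calculation that this observation leaves implicit. Your remark about why the $\exists s$ must be pulled to the front — so the bounded universals meet a $\Pi_{m-1}$ rather than a $\Sigma_m$ matrix, avoiding any appeal to $\mathsf{B\Sigma}_m$-collection over $\base$ — is exactly the point the paper makes in the surrounding remark, and is the real content of the claim.
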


Note that $\tau$ is the result of applying the Diagonal Lemma to a formula that is $\Sigma_m$. Applications of the Diagonal Lemma do not increase the quantifier complexity of formulas.

\begin{claim}
    $\tau_0$ is m-consistent.
\end{claim}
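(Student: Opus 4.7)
The plan is to derive $\tau_0$'s $m$-consistency from the stronger internal fact that $\mathsf{m}^\omega T \vdash m\mathsf{Con}(\tau_0)$. Once established, the claim follows because $m\mathsf{Con}(\tau_0)$ is $\Pi_{m+1}$ and $\mathsf{m}^\omega T$ is $\Pi_{m+1}$-sound, equivalent to the $m$-consistency we are given.

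To prove $\mathsf{m}^\omega T \vdash m\mathsf{Con}(\tau_0)$, I would reason inside $\mathsf{m}^\omega T$ and split on whether the $\Sigma_m$-condition (2a) ever fails. Since the negation of (2a) is $\Pi_m$, the $\mathsf{I\Sigma}_m$-fragment of $\mathsf{m}^\omega T$ supplies a least failure point $n_0$ whenever one exists. In that Visser-style branch, unfolding the defining equivalence of $\tau$ backward yields $\tau_{n_0} = T$, $\tau_{n_0-1} = T + m\mathsf{Con}(T)$, and so on down to $\tau_0 = T + m\mathsf{Con}^{n_0}(T)$; hence $m\mathsf{Con}(\tau_0)$ reduces to an instance of the axiom $\forall x\, m\mathsf{Con}^x(T)$.

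The remaining branch, where (2a) holds at every $n$, is the technically delicate case---it amounts to proving internally that $m\mathsf{Con}(\mathsf{m}^\omega T) \to m\mathsf{Con}(\tau_0)$. Here the Herbrand-style formulation of (2a) plays the key role. For any hypothetical $\tau_0$-derivation of $\neg\psi$ from a true $\Pi_m$-formula $\psi$, one passes to a bound $n$ exceeding every relevant G\"odel code; at that $n$, (2a) supplies explicit witnesses to all $\mathsf{m}^\omega T$-provable $\Sigma_m$-formulas of complexity at most $n$. Combining these witnesses with $\mathsf{m}^\omega T$'s proof of $m\mathsf{Con}(T)$, which gives $\Sigma_m$-reflection for $T$, converts the purported derivation into a bounded $\Sigma_m$-formula provable in $\mathsf{m}^\omega T$ whose (2a)-witness, evaluated by the $\Pi_{m-1}$-truth predicate, forces the truth of $\neg\psi$---contradicting the assumed truth of $\psi$.

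The main obstacle is precisely this second branch. Visser's original proof closes the analogous gap via $\Sigma_1$-completeness of the base theory, but here the corresponding step has to be played by the internal Herbrand witnesses supplied by (2a), which act as a bounded $\Sigma_m$-soundness certificate for $\mathsf{m}^\omega T$. The bookkeeping is significantly heavier than in Visser's setting because the witnessed soundness lives at quantifier level $\Sigma_m$ rather than $\Sigma_1$, which is why (2a) was packaged with the outer existential witness in the first place.
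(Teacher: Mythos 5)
Your opening framework (reduce to $\mathsf{m}^\omega T \vdash m\mathsf{Con}(\tau_0)$ and then invoke $\Pi_{m+1}$-soundness) matches the paper's conclusion, and your first branch---reasoning under a least failure of (2a) and unfolding $\tau_0$ down to $T + m\mathsf{Con}^{n_0}(T)$---is morally what the paper does, though you gloss over the bookkeeping the paper carries out explicitly (the side conditions $\neg A$, $\theta(c)$ are needed to prove, inside $\mathsf{m}^\omega T$, that the formula $\tau(n_0,\cdot)$ genuinely numerates just $T$; the equivalence is not literal identity). But your decomposition is different from the paper's in a way that creates a genuine gap, and it is exactly where you flag trouble.

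The paper never attempts your second branch. It proves only the conditional $\mathsf{m}^\omega T \vdash \neg m\mathsf{Con}(\mathsf{m}^\omega T) \to m\mathsf{Con}(\tau_0)$, contraposes to $\mathsf{m}^\omega T \vdash \neg m\mathsf{Con}(\tau_0) \to m\mathsf{Con}(\mathsf{m}^\omega T)$, and then appeals \emph{externally} to the G\"odel-2-type theorem that no consistent extension of an r.e.\ theory $S$ by true $\Sigma_{m+1}$ sentences proves $m\mathsf{Con}(S)$ (Beklemishev, Prop.\ 2.16). Since $\neg m\mathsf{Con}(\tau_0)$ is $\Sigma_{m+1}$ and $\mathsf{m}^\omega T$ is $m$-consistent (hence consistent with any true $\Sigma_{m+1}$ sentence, by Lemma \ref{trivial-lemma}), the hypothesis that $\neg m\mathsf{Con}(\tau_0)$ is true collapses into a contradiction. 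Your proposal instead tries to prove the complementary implication $[\text{(2a) holds at every }n] \to m\mathsf{Con}(\tau_0)$ \emph{directly inside} $\mathsf{m}^\omega T$. This is precisely the implication one would expect to resist a Herbrand-style verification: in that branch the fixed-point equations never bottom out, so $\tau_0 = T + m\mathsf{Con}(\tau_1)$, $\tau_1 = T + m\mathsf{Con}(\tau_2)$, and so on indefinitely. A purported $\tau_0$-refutation of a true $\Pi_m$ sentence $\psi$ reduces to $T \vdash m\mathsf{Con}(\tau_1) \to \neg\psi$; to discharge $m\mathsf{Con}(\tau_1)$ you would need the very claim you are trying to prove, one index higher, giving an infinite regress. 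The witnesses from (2a) certify bounded $\Sigma_m$-soundness of $\mathsf{m}^\omega T$, but that does not let you reflect on the \emph{self-referential} sentence $m\mathsf{Con}(\tau_1)$ sitting among $\tau_0$'s axioms. Your final paragraph correctly identifies this as the obstacle, but the sketch does not resolve it, and I do not see how it could without reproducing the G\"odel-2 argument that the paper invokes. The missing ingredient in your proposal is exactly the external appeal to Beklemishev's Proposition 2.16, which lets the paper avoid ever reasoning in the branch where (2a) persistently holds.
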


First, we need to calculate some more quantifier complexities. We have already observed that $\tau(0,\varphi)$ is ($\base$-equivalent to) $\Sigma_m$. Hence, the following is $\Pi_m$:
$$\mathsf{Con}(\tau_0+\varphi):= \forall x \forall y \Big( x=\ulcorner \bigwedge \{ \varphi_ z \wedge \varphi \mid z\leq y \wedge  \tau(0,z) \}\urcorner \to \mathsf{Con}(x)   \Big).$$

This means that the following formula is $\Pi_{m+1}:$
$$m\mathsf{Con}(\tau_0):= \forall \varphi\in \Pi_m\big( \mathsf{True}_{\Pi_m}(\varphi) \to \mathsf{Con}(\tau_0+\varphi)\big). $$
So $\neg m\mathsf{Con}(\tau_0)$ is $\Sigma_{m+1}$.

\emph{Reason in $\mathsf{m}^{\omega}T$:} Suppose $\neg m\mathsf{Con}(\mathsf{m}^{\omega}T)$. Then for some $n$ and some $\Sigma_m$ sentence $A$, $n$ encodes a $\mathsf{m}^{\omega}T$ proof of $A$ but $A$ is not a $\Sigma_m$ truth. By $\mathsf{I\Sigma_m}$ we can assume that $n$ is the least such proof of a false $\Sigma_m$-sentence. Then, by the fixed point definition, $\forall \varphi\big(\tau(n,\varphi)\leftrightarrow \phi\in T)\big)$. Moreover, by $\Sigma_m$ collection, there is $c$ such that
\[\forall k\leq n-1 \; \forall A\leq n \Big( A\in\Pi_{m-1} \wedge \mathsf{Prf}_{\mathsf{m}^{\omega}T}(k,\exists x A) \to  \mathsf{True}_{\Pi_{m-1}}(A(c_k))\Big).\]
Call the above sentence $\theta(c)$ and assume observe that $\theta(c)$ is a true $\Pi_{m-1}$ sentence.
It follows from the definition of $\tau$ and the choice of $n$ that for each $k<n$ we have
\[\forall \varphi\big(\tau(k,\varphi)\leftrightarrow \bigl(\varphi \in T \vee \varphi = \ulcorner m\mathsf{Con}(\tau_{k+1})\urcorner\bigr)\big).\]

Using the $\base$-provable properties of the fixpoint definition we observe that
\[\base+\neg A + \theta(c)\vdash \forall \phi\bigl(\tau(n,\phi)\leftrightarrow \phi\in T\bigr).\]
Hence,
\[\base +\neg A+\theta(c) + m\mathsf{Con}(T)\vdash m\mathsf{Con}(\tau_n).\]
Assume that  for $1\leq k\leq n+1$
\[\base +\neg A+\theta(c) + m\mathsf{Con}^k(T)\vdash m\mathsf{Con}(\tau_{n-k+1}).\]
Note that the statement $m\mathsf{Con}(\tau_{n-k+1})$ is very close to the theory $\tau_{n-k}$ itself. All that is missing are the axioms of $T$. Hence, it follows that
$$T + \neg A+\theta(c)+m\mathsf{Con}^k(T)\vdash \tau_{n-k}.$$
Hence, $ \base + m\mathsf{Con}\Big(T+\neg A+\theta(c)+m\mathsf{Con}^k(T) \Big)\vdash m\mathsf{Con}(\tau_{n-k})$. 

Since $\neg A$ and $\theta(c)$ are both $\Pi_m$:
$$\base\vdash \big(\neg A\wedge\theta(c)\wedge m\mathsf{Con}^{k+1}(T) \big)\rightarrow m\mathsf{Con}(\neg A+\theta(c)+T +m\mathsf{Con}^{k}(T)).$$

Combining the previous two lines, we infer: 
\[\base +\neg A+\theta(c) + m\mathsf{Con}^{k+1}(T)\vdash m\mathsf{Con}(\tau_{n-k}).\]
By $\Sigma_1$ induction we conclude that 
\[\base +\neg A+\theta(c) + m\mathsf{Con}^{n}(T)\vdash m\mathsf{Con}(\tau_{1}).\]
So $T  +\neg A+\theta(c) + m\mathsf{Con}^{n}(T)\vdash \tau_0$ and since, in $\mathsf{m}^{\omega}T+\neg A + \theta(c)$, the theory on the left hand side is $m$-consistent, so is $\tau_0$. 
 
The previous argument shows that $\mathsf{m}^{\omega}T\vdash \neg m\mathsf{Con}(\mathsf{m}^{\omega}T)\to m\mathsf{Con}(\tau_0)$. So, contraposing, $\mathsf{m}^{\omega}T\vdash\neg m\mathsf{Con}(\tau_0) \to m\mathsf{Con}(\mathsf{m}^{\omega}T)$. 

It is well-known that no consistent extension of an r.e. theory $S$ by true $\Sigma_{m+1}$ sentences proves $m\mathsf{Con}(S)$ \cite[Proposition 2.16]{beklemishev2005reflection}. So no consistent extension of $\mathsf{m}^{\omega}T$ by true $\Sigma_{m+1}$ sentences proves $\neg m\mathsf{Con}(\tau_0)$. Since $\neg m\mathsf{Con}(\tau_0)$ is a $\Sigma_{m+1}$ sentence, this means $\mathsf{m}^{\omega}T+\neg m\mathsf{Con}(\tau_0)$ is inconsistent, whence $\mathsf{m}^{\omega}T$ proves $m\mathsf{Con}(\tau_0)$. Since we assumed that $\mathsf{m}^{\omega}T$ is $m$-consistent, it follows that $\tau_0$ is $m$-consistent.

\begin{claim}
    For each $n$, $T_n\vdash m\mathsf{Con}(\tau_{n+1})$.
\end{claim}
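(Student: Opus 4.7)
The plan is to observe that the hypothesis of the theorem already delivers the ``good'' case of the fixed-point definition of $\tau$ at every standard $n$, so that $m\mathsf{Con}(\tau_{n+1})$ is literally among the axioms of $\tau_n$. In outline, there are three steps: first, deduce $\Sigma_m$-soundness of $\mathsf{m}^{\omega}T$ from its assumed $m$-consistency; second, use this soundness to externally verify condition (2a) in the fixed-point definition at every $n\in\omega$; third, read off that $m\mathsf{Con}(\tau_{n+1})\in\tau_n$.

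For the first step I would invoke the proposition from \textsection\ref{preliminaries} equating $m$-consistency with $\Sigma_m$-soundness for extensions of $\mathsf{EA}$. By hypothesis $\mathsf{m}^{\omega}T$ is $m$-consistent, and so every $\Sigma_m$-sentence it proves is true in $\mathbb{N}$.

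For the second step, fix $n\in\omega$ and verify externally that there is a (finite) sequence $s$ of length $n+1$ such that whenever $k\leq n$, $A\leq n$, $A\in\Pi_{m-1}$, and $k$ codes an $\mathsf{m}^{\omega}T$-proof of $\exists x\, A(x)$, we have $\mathsf{True}_{\Pi_{m-1}}(A(s_k))$. For each such $k$ there is at most one eligible $A$, since a proof has a unique conclusion; whenever such an $A$ exists, $\exists x\, A(x)$ is a provable $\Sigma_m$-sentence, hence true by the first step, and we may pick $s_k$ to be a witness. For the remaining $k$ we set $s_k=0$. Concatenating yields the required $s$, so (2a) is satisfied in $\mathbb{N}$.

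For the third step, combining this $s$ with clause (2b) at $\varphi=\ulcorner m\mathsf{Con}(\tau_{n+1})\urcorner$ gives $\mathbb{N}\vDash\tau(n,\ulcorner m\mathsf{Con}(\tau_{n+1})\urcorner)$; equivalently, $m\mathsf{Con}(\tau_{n+1})\in\tau_n$, whence $\tau_n\vdash m\mathsf{Con}(\tau_{n+1})$. To package this into a proof of Theorem \ref{thm::SigmaDefUniSeq}, one then propagates $m$-consistency: the previous claim handles $\tau_0$, and if $\tau_n$ is $m$-consistent (equivalently, $\Pi_{m+1}$-sound), then its $\Pi_{m+1}$-consequence $m\mathsf{Con}(\tau_{n+1})$ is true, so $\tau_{n+1}$ is also $m$-consistent. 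I do not foresee any real obstacle: the $\Sigma_m$-soundness of $\mathsf{m}^{\omega}T$ is doing all of the work, and once that is extracted from the $m$-consistency hypothesis, the remaining argument is just unpacking the defining disjunction of $\tau$.
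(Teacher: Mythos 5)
Your argument is correct and matches the paper's (very terse) proof: the paper simply notes that $\Sigma_m$-soundness of $\mathsf{m}^{\omega}T$ guarantees the second disjunct of the fixed-point definition is realized at every standard $n$, so $m\mathsf{Con}(\tau_{n+1})$ is an axiom of $\tau_n$. You have unpacked exactly that observation—extracting $\Sigma_m$-soundness from $m$-consistency, externally constructing the witness sequence $s$ for clause (2a), and reading off clause (2b)—so this is the same route, just written out in detail.
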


Since, by assumption, $\mathsf{m}^{\omega}T$ is $\Sigma_m$-sound, the second disjunct of the fixed point definition of $\tau_n$ is always realized. Which is to say that, for each $n$, the formula $m\mathsf{Con}(\tau_{n+1})$ belongs to the theory $T_n$.
\end{proof}

\subsection{Craig's Trick}

Theorem \ref{thm::SigmaDefUniSeq} guarantees the existence of $\Sigma_m$-definable $m$-sequences over certain extensions of $\base$. In fact, we can secure the existence of $\Pi_{m-1}$-definable $m$-sequences by applying a variant of Craig's trick (also known as ``padding'' by recursion theorists). However, there is a cost. In particular, we can secure the existence of such sequences only over extensions of the stronger base system $\mathsf{B\Sigma}_m$. So, strictly speaking, the following theorem is neither stronger nor weaker than Theorem \ref{thm::SigmaDefUniSeq} but appears to be incomparable with it.

\begin{theorem}\label{craig-trick-theorem}
Let $m\geq 2$. Let $T$ be an r.e.\  extension of $\mathsf{B\Sigma}_m$ such that $\mathsf{I\Sigma_m} + \forall x m\mathsf{Con}^{x}(T)$ is $m$-consistent. There is a $\Pi_{m-1}$-definable uniform $m$-sequence over $T$.
\end{theorem}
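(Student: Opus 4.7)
The plan is to apply a variant of Craig's trick to the $\Sigma_m$-definable uniform $m$-sequence $\tau$ produced by Theorem \ref{thm::SigmaDefUniSeq}, lowering its syntactic complexity to $\Pi_{m-1}$ at the cost of strengthening the base theory from $\base$ to $\mathsf{B\Sigma_m}$. Writing $\tau(n,\varphi)$ in prenex form as $\exists y\, \pi(n,\varphi,y)$ with $\pi \in \Pi_{m-1}$, I would define a new binary formula $\tau'(n,\psi)$ to hold iff $\psi$ is syntactically the conjunction $\varphi \wedge (\underline{y} = \underline{y})$ for some $\varphi$ and $y$ with $\pi(n,\varphi,y)$. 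The shape of $\psi$ uniquely determines $\varphi$ and $y$ by $\Delta_0$ parsing, and once they are extracted the residual requirement is a $\Pi_{m-1}$ check; hence $\tau'$ is $\base$-equivalent to a $\Pi_{m-1}$ formula.

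Next I would verify the three clauses defining a uniform $m$-sequence for $\tau'$. Setting $T_n := \{\varphi : \tau(n,\varphi)\}$ and $T_n' := \{\psi : \tau'(n,\psi)\}$, every $\tau'$-axiom $\varphi \wedge (\underline{y}=\underline{y})$ is logically equivalent to $\varphi$, and every $\tau$-axiom $\varphi$ comes with some witness $y$ and hence has a $\tau'$-padded counterpart. Thus $T_n'$ and $T_n$ have the same deductive closure, so the $m$-consistency condition and the extension-of-$T$ condition transfer directly from Theorem \ref{thm::SigmaDefUniSeq}. Clause (2) (provable inhabitedness of $\tau'(n+1,\cdot)$) follows because $T_n$ already proves $\exists x\, \tau(n+1,x)$, and any such witness $\varphi$ together with a witness $y$ to $\pi(n+1,\varphi,y)$ gives, provably in $T$, a $\tau'$-axiom.

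The main work is clause (3): $T_n'$ must prove, for every bound $y$, the $m$-consistency of the conjunction of $\tau'(n+1,\cdot)$-axioms below $y$. Working in $T_n$, each such padded $\psi = \varphi \wedge (\underline{y_\psi} = \underline{y_\psi})$ is provably equivalent to its underlying $\varphi$, with $\varphi \leq \psi \leq y$ and $\tau(n+1,\varphi)$ witnessed by $y_\psi$. Hence the $\tau'$-conjunction below $y$ is logically equivalent to a conjunction over a subset of $\{\varphi \leq y : \tau(n+1,\varphi)\}$ and is therefore a consequence of $\bigwedge\{\varphi \leq y : \tau(n+1,\varphi)\}$; since $m$-consistency passes to weaker theories, the $m$-consistency of the latter, guaranteed by Theorem \ref{thm::SigmaDefUniSeq}, transfers to the $\tau'$-conjunction. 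The main obstacle is formalizing this equivalence uniformly in $y$ inside $T$: the $\tau'$-conjunction is a bounded conjunction of $\Pi_{m-1}$-defined axioms, and forming its code and reasoning about it uniformly requires bounded $\Pi_{m-1}$-collection, which is the content of $\mathsf{B\Sigma_m}$ and explains the strengthening of the base theory.
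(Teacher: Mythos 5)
Your padding strategy is the right idea — it is essentially the same Craig-trick move that drives the paper's proof — but the attempt to obtain it by post-processing the $\Sigma_m$ sequence $\tau$ of Theorem~\ref{thm::SigmaDefUniSeq} has a genuine gap, and it is exactly at clause~(3), the step you yourself flag as ``the main work.''

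The trouble is the implication $m\mathsf{Con}(\tau_{n+1}) \to m\mathsf{Con}(\tau'_{n+1})$, which your argument needs $T_n$ (an extension of $\mathsf{B\Sigma}_m$) to prove. Unwinding the definition, $m\mathsf{Con}(\tau_{n+1})$ is a \emph{conditional}: for all $x,y$, \emph{if} $x$ codes the conjunction $\bigwedge\{\varphi_z \wedge \psi : z\leq y \wedge \tau(n+1,z)\}$, \emph{then} $\mathsf{Con}(x)$. To use it on a code $x'$ of the corresponding $\tau'$-conjunction at bound $y$, you must produce a $\tau$-conjunction code $x$ with $x\vdash x'$ and discharge the antecedent. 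But $\tau$ is $\Sigma_m$, so the bounded set $\{z\leq y : \tau(n+1,z)\}$ is $\Sigma_m$-defined, and forming its code is bounded $\Sigma_m$-comprehension — that is $\mathsf{I\Sigma}_m$, strictly stronger than the available $\mathsf{B\Sigma}_m$. So $T_n$ cannot in general prove such an $x$ exists, and $m\mathsf{Con}(\tau_{n+1})$ can be vacuous on exactly those (nonstandard) bounds $y$ for which the $\tau'$-code \emph{does} exist (which it does, since $\tau'$ is $\Pi_{m-1}$ and $\mathsf{B\Sigma}_m \supseteq \mathsf{I\Sigma}_{m-1}$ gives bounded $\Pi_{m-1}$-comprehension). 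You locate the need for $\mathsf{B\Sigma}_m$ on the $\tau'$ side; the real obstruction is on the $\tau$ side, and $\mathsf{B\Sigma}_m$ does not remove it.

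More structurally: the axiom that $T_n$ actually contains (from the fixed point of Theorem~\ref{thm::SigmaDefUniSeq}) is $m\mathsf{Con}(\tau_{n+1})$ for the \emph{original} $\Sigma_m$ formula, and no amount of re-verification closes the gap between that and $m\mathsf{Con}(\tau'_{n+1})$. The paper avoids the transfer entirely by building the padding into the self-reference itself: the fixed-point formula is arranged to be $\Pi_{m-1}$ (over $\mathsf{B\Sigma}_m$) from the outset, by putting the axioms in the form $\bigwedge_{i<s} m\mathsf{Con}(\tau_{n+1})$ with the witness sequence $s$ bounded by the axiom's own code. Then the $m\mathsf{Con}$ axiom of $T_n$ already refers to the low-complexity numeration, and the $m$-consistency argument of Theorem~\ref{thm::SigmaDefUniSeq} is re-run for this new fixed point rather than cited as a black box. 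If you fold your $\underline{y}=\underline{y}$ padding into the diagonal construction in the same way, your approach becomes a correct proof.
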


\begin{proof} 
Let $\mathsf{m}^{\omega}T$ abbreviate $\mathsf{I\Sigma}_m + \forall x \,m\mathsf{Con}^x(T)$. Ultimately, we want to obtain a $\Pi_{m-1}$ formula $\tau(n,x)$ such that $\mathsf{B\Sigma}_m$ proves $\tau(n,x)$ iff:
\begin{enumerate}
\item $x\in T$ or
\item both of the following hold for some sequence $s<x$:
\begin{enumerate}
    \item $\forall k\leq n \forall A\leq n \bigl(A\in \form_{\Pi_{m-1}} \wedge \mathsf{Prf}_{\mathsf{m}^{\omega}T}(k,\exists x A) \rightarrow \mathsf{Tr}_{\Pi_{m-1}}(A(s_k))\bigr)$.
    \item $x=\ulcorner\bigwedge_{i<s} m\mathsf{Con}(\tau(n+1,x))\urcorner$.
\end{enumerate}
\end{enumerate}

Let us now explain how we find this formula $\tau(n,x)$ in detail. We start by considering the following disjunctive formula $\psi(n,x,z)$
\begin{enumerate}
\item $x\in T$ or
\item both of the following hold for some sequence $s<x$:
\begin{enumerate}
    \item $\forall k\leq n \forall A\leq n \bigl(A\in \form_{\Pi_{m-1}} \wedge \mathsf{Prf}_{\mathsf{m}^{\omega}T}(k,\exists x A) \rightarrow \mathsf{Tr}_{\Pi_{m-1}}(A(s_k))\bigr)$.
    \item $x=\ulcorner\bigwedge_{i<s} m\mathsf{Con}(t(z,n))\urcorner$.
\end{enumerate}
\end{enumerate}

In the above $\bigwedge_{i<z} B$ denotes the conjunction of length $z$ of sentence $B$ and $t(z,n)$ is a definable function which given a code $z$ of a formula $\phi(x,y)$ (with at most two free variables) returns the code of $\phi(n+1,y)$. $\psi(n,x,z)$ is of $\Sigma_m$ complexity (note that $(2a)$ is $\Pi_{m-1}$ and it is preceded by a bounded quantifier $\exists s <x$.) By applying the collection scheme, $\psi(n,x,z)$ is canonically equivalent in $\mathsf{B\Sigma}_m$ to a $\Pi_{m-1}$ formula $\tau'(n,x,z)$. That is, ${\sf B\Sigma}_m$ proves
\[\psi(n,x,z)\equiv\tau'(n,x,z).\]
We then apply the Diagonal Lemma to $\tau'(n,x,z)$ obtaining a formula $\tau(n,x)$ such that $\mathsf{B\Sigma}_m$ proves
\[\tau(n,x)\equiv\tau'(n,x,\ulcorner\tau(n,x)\urcorner).\]
(Indeed, this biconditional is provable in much weaker systems.) It is clear from the two centered biconditionals that $\tau(n,x)$ works.

\begin{claim}
    $\tau_0$ is $m$-consistent.
\end{claim}
\emph{Reason in $\mathsf{m}^{\omega}T$:} Suppose $\neg m\mathsf{Con}(\mathsf{m}^{\omega}T)$. Then for some $n$ and some $\Sigma_m$ sentence $A$, $n$ encodes a $\mathsf{m}^{\omega}T$ proof of $A$ but $A$ is not a $\Sigma_m$ truth. By $\mathsf{I\Sigma_m}$ we can assume that $n$ is the least such proof of a false $\Sigma_m$-sentence. Then, by the fixed point definition, $\forall \varphi\big(\tau(n,\varphi)\leftrightarrow \varphi\in T)\big)$. Moreover, by $\Sigma_m$ collection, there is $c$ such that
\[\forall k\leq n-1 \; \forall A\leq n-1 \Big( A\in\Pi_{m-1} \wedge \mathsf{Prf}_{\mathsf{m}^{\omega}T}(k,\exists x A) \to  \mathsf{True}_{\Pi_{m-1}}(A(c_k))\Big).\]
Call the above sentence $\theta(c)$ and assume observe that $\theta(c)$ is a true $\Pi_{m-1}$ sentence.
It follows from the definition of $\tau$ and the choice of $n$ that for each $k<n$ we have
\[\forall \varphi\big(\tau(k,\varphi)\leftrightarrow \bigl(\varphi \in T\vee \varphi = \ulcorner m\mathsf{Con}(\tau_{k+1})\urcorner\bigr)\big).\]
Note that $\tau_{k+1}$ is understood as the theory numerated by $\tau(k+1,x)$ where $k+1$ is a fixed integer and $x$ varies.

Using the $\mathsf{B\Sigma}_m$-provable properties of the fixpoint definition we observe that
\[\mathsf{B\Sigma}_m+\neg A + \theta(c)\vdash \forall \varphi\bigl(\tau(n,\varphi)\leftrightarrow \varphi \in T\bigr).\]
Hence,
\[\mathsf{B\Sigma}_m +\neg A+\theta(c) + m\mathsf{Con}(T)\vdash m\mathsf{Con}(\tau_n).\]
Assume that  for $1\leq k\leq n+1$
\[\mathsf{B\Sigma}_m +\neg A+\theta(c) + m\mathsf{Con}^k(T)\vdash m\mathsf{Con}(\tau_{n-k+1}).\]
Just as in the proof of Theorem \ref{thm::SigmaDefUniSeq}, this entails the following claim:
\[\mathsf{B\Sigma}_m +\neg A+\theta(c) + m\mathsf{Con}^{k+1}(T)\vdash m\mathsf{Con}(\tau_{n-k}).\]
By $\Sigma_1$ induction we conclude that:
\[\mathsf{B\Sigma}_m +\neg A+\theta(c) + m\mathsf{Con}^{n}(T)\vdash m\mathsf{Con}(\tau_{1}).\]
So $T +\neg A+\theta(c) + m\mathsf{Con}^{n}(T)\vdash \tau_0$ and since, in $\mathsf{m}^{\omega}T+\neg A + \theta(c)$, the theory on the left-hand side is $m$-consistent, so is $\tau_0$. 
 
The previous argument shows that $\mathsf{m}^{\omega}T\vdash \neg m\mathsf{Con}(\mathsf{m}^{\omega}T)\to m\mathsf{Con}(\tau_0)$. So, contraposing, $\mathsf{m}^{\omega}T\vdash\neg m\mathsf{Con}(\tau_0) \to m\mathsf{Con}(\mathsf{m}^{\omega}T)$. 

It is well-known that no consistent extension of an r.e.\ theory $S$ by true $\Sigma_{m+1}$ sentences proves $m\mathsf{Con}(S)$ \cite[Proposition 2.16]{beklemishev2005reflection}. So no consistent extension of $\mathsf{m}^{\omega}T$ by true $\Sigma_{m+1}$ sentences proves $\neg m\mathsf{Con}(\tau_0)$. Since $\neg m\mathsf{Con}(\tau_0)$ is a $\Sigma_{m+1}$ sentence, this means $\mathsf{m}^{\omega}T+\neg m\mathsf{Con}(\tau_0)$ is inconsistent, whence $\mathsf{m}^{\omega}T$ proves $m\mathsf{Con}(\tau_0)$. Since we assumed that $\mathsf{m}^{\omega}T$ is $m$-consistent, it follows that $\tau_0$ is $m$-consistent.

\begin{claim}
    For each $n$, $T_{n}\vdash m\mathsf{Con}(\tau_{n+1}).$
\end{claim}
Since, by assumption, $\mathsf{m}^{\omega}T$ is $m$ consistent, it is $\Sigma_m$-sound, so whenever $\mathsf{m}^{\omega}T\vdash \exists x \phi(x)$, then there is $k\in\mathbb{N}$ such that $\mathbb{N}\models \phi(\num{k})$. For any such $k$, the sentence $\bigwedge_{i<k}m\mathsf{Con}(\tau_{n+1})$ is  an axiom of $T_{n}$.
\end{proof}

\subsection{Generalizing a Negative Result}

Note that Theorem \ref{craig-trick-theorem} is the best possible result in light of Theorem \ref{original-descending}. In particular, we cannot strengthen Theorem \ref{craig-trick-theorem} to get a $\Sigma_{m-1}$-definable $m$-sequence over $T$; this would contradict Theorem \ref{original-descending} in the case $m=2$. The cases $m>2$ will likewise be unachievable, in light of the following generalization of Theorem \ref{original-descending}.

\begin{theorem}\label{generalization}
    For each $m\geq 2$, there is no $\Sigma_{m-1}$-definable uniform $m$-sequence over $\mathsf{B\Sigma_1}$.
\end{theorem}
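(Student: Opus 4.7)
I would adapt the argument of Theorem~\ref{original-descending} (the $m=2$ case, due to Pakhomov--Walsh) to general $m\geq 2$. Suppose for contradiction that $\tau \in \Sigma_{m-1}$ defines a uniform $m$-sequence $(T_n)_{n\in\mathbb{N}}$ over $\mathsf{B\Sigma_1}$, and let $T_\omega$ denote the $\Sigma_{m-1}$-definable ``limit'' theory $\{\varphi : \exists n\, \tau(n,\ulcorner\varphi\urcorner)\}$.

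The outline follows the $m=2$ case in three steps. First, I would establish that $T_\omega$ is $m$-consistent: this requires combining the $m$-consistency of each $T_n$ with the uniform condition to rule out incompatibilities between axioms drawn from different slices (any finite $m$-inconsistency witness can be located in $\bigcup_{i\leq N} T_i$ for some $N$, and the uniform provability of $m\mathsf{Con}$ down the chain forces this union to inherit $m$-consistency from a single $T_n$). Second, by a formalized reflection-propagation argument, I would show $T_\omega \vdash m\mathsf{Con}(T_\omega)$: the uniform condition says each $T_n$ proves the $m$-consistency of the slice $\tau(n+1,\cdot)$, and using the $\Pi_{m+1}$-soundness of each $T_n$ (equivalent to its $m$-consistency) one iteratively derives $T_0 \vdash m\mathsf{Con}(T_n)$ for every $n$; formalized compactness then yields $T_\omega \vdash \forall n\, m\mathsf{Con}(T_n)$, which is equivalent to $T_\omega \vdash m\mathsf{Con}(T_\omega)$. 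Third, I would invoke a $\Sigma_{m-1}$-definable generalization of Proposition~2.16 of \cite{beklemishev2005reflection}: no $m$-consistent $\Sigma_{m-1}$-definable theory proves its own $m$-consistency. The generalization follows the same diagonal/L\"ob pattern as the original, now with the provability predicate at complexity $\Sigma_{m-1}$ rather than $\Sigma_1$; the key soundness and non-absorption features transfer directly. This yields the contradiction.

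The hardest step will be the reflection-propagation in the second step. Because $\tau$ is $\Sigma_{m-1}$ rather than $\Sigma_1$, the statement ``$T_n \vdash m\mathsf{Con}(T_{n+1})$'' is itself a $\Sigma_{m-1}$ fact, and propagating $m$-consistency through the sequence uniformly within $T_\omega$ requires the background reasoning theory to handle $\Sigma_{m-1}$-facts---which does not follow from merely extending $\mathsf{B\Sigma_1}$. I expect this to require either a Craig-style reformulation of the sequence (opposite in flavor to the one used in Theorem~\ref{craig-trick-theorem}) that converts the $\Sigma_{m-1}$-definition into a $\Sigma_1$-definition at the cost of padding axioms with true $\Pi_{m-2}$ content, or a delicate use of the $\Pi_{m+1}$-soundness of $T_\omega$ itself to certify the needed $\Sigma_{m-1}$-facts. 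The trade-off between the complexity of the defining formula and the strength of the base theory is precisely what controls the sharpness of the theorem---matching the positive counterpart in Theorem~\ref{craig-trick-theorem}.
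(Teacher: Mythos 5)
Your proposal takes a genuinely different route from the paper, and it has a structural gap that I don't think can be repaired in the form you describe.

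The paper's proof follows the Pakhomov--Walsh template literally: it packages the existence of the hypothesized sequence into a single arithmetical sentence $\mathsf{DS}$ (asserting the existence of a $\Sigma_{m-1}$ formula $\sigma$ satisfying $\theta_1,\dots,\theta_4$), and then shows $\mathsf{B\Sigma_1}+\mathsf{DS}$ proves $\mathsf{Con}(\mathsf{B\Sigma_1}+\mathsf{DS})$. The engine of that proof is the \emph{shift} trick: reasoning inside $\mathsf{B\Sigma_1}+\mathsf{DS}$, one passes to the shifted sequence $\sigma^\star(x,y):=\sigma(x+1,y)$, notes that $\theta_2(\sigma^\star)\wedge\theta_3(\sigma^\star)\wedge\theta_4(\sigma^\star)$ is a true $\Pi_m$ sentence (this is exactly where $m\geq 3$ is used, since $\sigma$ is $\Sigma_{m-1}$) and hence is co-consistent with the $m$-consistent theory $T_0$; and the uniform condition gives $T_0\vdash\theta_1(\sigma^\star)$. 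So $T_0+\mathsf{DS}$ is consistent, hence so is $\mathsf{B\Sigma_1}+\mathsf{DS}$, and G\"odel's second incompleteness theorem finishes it off. No union theory ever appears, and one never needs $m$-consistency of anything beyond $T_0$.

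Your approach instead forms the union $T_\omega=\bigcup_n T_n$ and tries to derive a contradiction with a L\"ob-type theorem by showing $T_\omega$ is $m$-consistent and proves $m\mathsf{Con}(T_\omega)$. The first step already fails: nothing in the hypotheses forces the slices to be pairwise compatible. The uniform condition $T_n\vdash m\mathsf{Con}(T_{n+1})$ is a proof-theoretic relation between the $n$th theory and a \emph{name} for the $(n+1)$st; it places no constraint on whether the axiom sets $T_n$ and $T_{n+1}$ cohere set-theoretically. One can arrange, by a Visser-style self-referential construction, that $T_{2n}$ contains some $\varphi$ while $T_{2n+1}$ contains $\neg\varphi$ (choosing $\varphi$ so both extensions remain $m$-consistent), in which case $T_\omega$ is outright inconsistent while every individual slice is $m$-consistent and the uniform condition holds. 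Your parenthetical justification---that a finite $m$-inconsistency witness lies in $\bigcup_{i\leq N}T_i$ and the $m\mathsf{Con}$ chain ``forces this union to inherit $m$-consistency from a single $T_n$''---asserts precisely the missing fact without argument. The second step compounds this: you claim $\forall n\,m\mathsf{Con}(T_n)$ is equivalent to $m\mathsf{Con}(T_\omega)$, but $m\mathsf{Con}(T_\omega)$ requires the $m$-consistency of arbitrary finite sets of axioms drawn across slices, which the pointwise statements do not deliver. Finally, the appeal to a ``generalized Proposition 2.16'' for $\Sigma_{m-1}$-defined theories is itself a substantial claim: the provability predicate of a $\Sigma_{m-1}$-numerated theory is no longer $\Sigma_1$, the third derivability condition must be replaced, and the relevant L\"ob-type statement needs its own proof. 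The paper's $\mathsf{DS}$/shift argument deliberately sidesteps every one of these issues by keeping the G\"odel-II application at the level of the fixed r.e.\ base theory $\mathsf{B\Sigma_1}$ with its ordinary $\Sigma_1$ provability predicate.

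You correctly sense that the complexity bound $\Sigma_{m-1}$ interacts delicately with the base theory, and you correctly locate $m\geq 3$ as the new case to handle. But you misidentify the hardest step: the real issue is not propagating reflection down the chain but the unjustified coherence of the union, and the paper avoids it entirely by never forming that union.
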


\begin{remark}
    Note that the following proof is a relatively straightforward adaptation of the proof of \cite[Theorem 3.4]{pakhomov2021reflection}. In \textsection \ref{non-uniform-section} we provide some adaptations of this argument that are considerably less straightforward. Hence, the following proof may serve as a warmup for these results.
\end{remark}

\begin{proof}
For $m=2$, this is already covered by Theorem \ref{original-descending}. So for the rest of the proof we deal only with the case $m\geq 3$.

Suppose that there is such a sequence. We can assume that each slice is closed under conjunctions. Then $\mathsf{DS}$ is true, where $\mathsf{DS}$ is the statement 
$$\exists \sigma \in \Sigma_m \big( \theta_1(\sigma) \wedge \theta_2(\sigma) \wedge \theta_3(\sigma)\wedge \theta_4(\sigma)\big)$$ where we have:
\begin{align*}
    \theta_1(\sigma) &:= \forall x\big(\mathsf{True}_{\Sigma_{m-1}}(\sigma(0,x))\rightarrow m\mathsf{Con}(x)\big)\\
    \theta_2(\sigma) &:=  \forall x\big(\mathsf{True}_{\Sigma_{m-1}}\sigma(x,\mathsf{B\Sigma_1})\big)\\
    \theta_3(\sigma) &:=  \forall x \exists y \big(\mathsf{True}_{\Sigma_{m-1}}(\sigma(x,y)) \wedge \mathsf{Pr}_y(  \exists z \sigma(x+1,z) )\big)\\
    \theta_4(\sigma) &:= \forall x \exists y\Big(\mathsf{True}_{\Sigma_{m-1}}(\sigma(x,y))  \wedge \mathsf{Pr}_y\big(\forall z \big( \mathsf{True}_{\Sigma_{m-1}}(\sigma(x+1,z))\to m\mathsf{Con}(z)\big) \Big)
\end{align*}

Yet we will show that $\mathsf{B\Sigma_1}\vdash \neg \mathsf{DS}$. By G\"{o}del's second incompleteness theorem, it suffices to prove $\mathsf{Con}(\mathsf{B\Sigma_1}+\mathsf{DS})$ in $\mathsf{B\Sigma_1}+ \mathsf{DS}$. 

\emph{So let's reason in $\mathsf{B\Sigma_1}+ \mathsf{DS}$.}

Let $T_n$ be the $n$-th slice of $\tau$. Observe that $T_0$ proves the $m$-consistency of $T_1$.

Now we consider the sequence $\sigma^\star$ that results from shifting all entries in $\sigma$ one to the left. That is:
$$\forall x \forall y \big( \sigma^\star(x,y) \leftrightarrow \sigma(x+1,y) \big).$$

\begin{claim}
    $T_0+\mathsf{DS}$ is consistent.
\end{claim}

Note that $\theta_2(\sigma^\star)$, $\theta_3(\sigma^\star)$, and $\theta_4(\sigma^\star)$ are true $\Pi_m$ claims (note that here we are using the assumption that $m\geq 3$). 
$$\theta_2(\sigma^\star)\wedge \theta_3(\sigma^\star)\wedge \theta_4(\sigma^\star) $$
is true $\Pi_{m}$. Since $T_0$ is $m$-consistent, $T_0$ is consistent with any true $\Pi_{m}$ claim, so we infer that $T_0$ is consistent with this conjunction.

However, we know from the previous claim that $T_0$ proves the $m$-consistency of $T_1$. Hence $T_0\vdash \theta_1(\sigma^*)$. So $T_0$ is consistent with the conjunction:
$$\theta_1(\sigma^\star)\wedge\theta_2(\sigma^\star)\wedge \theta_3(\sigma^\star)\wedge \theta_4(\sigma^\star) $$
whence $T_0+\mathsf{DS}$ is consistent.

\begin{claim}
$\mathsf{B\Sigma_1}+\mathsf{DS}$ is consistent.
\end{claim}
This follows from the previous claim since $\theta_2(\sigma)$ informs us that $T_0$ proves $\mathsf{B\Sigma_1}$.
\end{proof}

%%%%%%%%%%%%%%%%%%%%%%%%%%%%%%%%%%%%%%%%%%%%%%%%%%%%%%%%%%%%%%%%%%%%%%%%%%%%%%%%%%%%%%%%%%%%%%%%%%%%%%%%
%%%%%%%%%%%%%%%%%%%%%%%%%%%%%%%%%%%%%%%%%%%%%%%%%%%%%%%%%%%%%%%%%%%%%%%%%%%%%%%%%%%%%%%%%%%%%%%%%%%%%%%%

\section{The Index Encoding}\label{non-uniform-section}

In this section we turn to the index encoding. Note that all theorems stated in this section are stated relative to the index encoding of sequences. However, we must now be careful about whether we are focusing on the \emph{uniform} or \emph{non-uniform} versions of our main question. Let's turn to the uniform version first; then we turn to the non-uniform version.

\subsection{The Uniform Version}
For the uniform version of our result we will present both a positive and a negative result.

\subsubsection{A Positive Result}

The theorems in the previous section all encoded sequences of theories via slices rather than via indices. Can we extract a uniform result using the index encoding \emph{from} analogous results using the slice encoding? It is not obvious how to extract one from the theorem statements, but we can extract one from the proofs. The key is that in each case our fixed point definition was piecewise: If this condition obtains, add this r.e.\ set of formulas; if that condition obtains, add this other r.e.\ set of formulas. In both cases the r.e.\ sets of formulas are specified via indices. This yields a piecewise definition of a sequence of indices. However, this piecewise definition involves conditioning, which raises the complexity of the sequence's definition. These observations yield the following theorem:

\begin{theorem}\label{uni-index-pos}
    For each $m\geq 0$ and for each r.e.\ $T\supseteq\mathsf{B\Sigma}_m$ such that $\mathsf{I\Sigma}_m + \forall x m\mathsf{Con}^x(T)$ is $m$-consistent, there is a $\Pi_m$-definable uniform $m$-sequence over $T$.
\end{theorem}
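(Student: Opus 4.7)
The plan is to extract a uniform index sequence from the fixed-point construction behind Theorem~\ref{craig-trick-theorem} (for $m \geq 2$; the cases $m \leq 1$ are already covered by Theorem~\ref{fss-theorem} and \cite[Theorem 3.8]{pakhomov2021reflection}). The guiding observation, flagged in the paragraph preceding the theorem statement, is that the slice constructions in Theorems~\ref{thm::SigmaDefUniSeq} and~\ref{craig-trick-theorem} proceed piecewise: on one branch an r.e.\ theory of one shape is enumerated, and on the other branch a different r.e.\ theory is enumerated. Each branch has a canonical elementary index, so the very same piecewise recipe can be read as a piecewise definition of indices. Encoding the case split inside $\tau$ is what costs us one extra quantifier of complexity.

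Concretely, fix an elementary index $e_T$ for $T$, and let $g$ be the elementary function such that $g(n)$ is a canonical index for the r.e.\ theory
\[
T + \forall y\,\bigl(\tau(n+1,y) \to m\mathsf{Con}(y)\bigr).
\]
Invoke the diagonal lemma to obtain a binary formula $\tau(n,x)$ that $\mathsf{B\Sigma}_m$-provably satisfies
\[
\tau(n,x) \;\leftrightarrow\; \bigl(C(n) \wedge x = g(n)\bigr) \vee \bigl(\neg C(n) \wedge x = e_T\bigr),
\]
where $C(n)$ is the $\mathsf{B\Sigma}_m$-equivalent $\Pi_{m-1}$ reformulation of the soundness condition from Theorem~\ref{craig-trick-theorem}, i.e., ``there exists a Skolem-witness sequence $s$ such that for all $k \leq n$ and all $\Pi_{m-1}$ formulas $A \leq n$, if $\mathsf{Prf}_{\mathsf{m}^{\omega}T}(k, \exists x\,A)$ then $\mathsf{Tr}_{\Pi_{m-1}}(A(s_k))$.'' A routine induction shows that $\tau(n,\cdot)$ is $\mathsf{B\Sigma}_m$-provably satisfied by a unique $x$, so $\tau$ encodes an index sequence $(T_n)_{n\in\mathbb{N}}$. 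The right-hand side is a Boolean combination of the $\Pi_{m-1}$ formula $C(n)$ with the elementary equalities $x = g(n)$ and $x = e_T$, hence prenex-equivalent to a $\Pi_m$ formula; since the diagonal lemma does not raise complexity, $\tau$ is $\Pi_m$-definable.

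Clauses (2) and (3) of the uniform $m$-sequence definition fall out of the construction with no additional effort. By hypothesis $\mathsf{m}^{\omega}T$ is $\Sigma_m$-sound, so externally $C(n)$ holds at every stage; hence $T_n$ is the theory numerated by $g(n)$, which by design contains the axiom $\forall y\,(\tau(n+1,y) \to m\mathsf{Con}(y))$ and has $y = g(n+1)$ as a $\mathsf{B\Sigma}_m$-provable witness to $\exists y\,\tau(n+1,y)$.

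The verification that each $T_n$ is $m$-consistent transfers essentially verbatim from the proof of Theorem~\ref{craig-trick-theorem}: working inside $\mathsf{m}^{\omega}T$ under the assumption $\neg m\mathsf{Con}(\mathsf{m}^{\omega}T)$, pick the least $\mathsf{m}^{\omega}T$-proof $n$ of a false $\Sigma_m$-sentence $A$, observe that from stage $n$ onward $\tau$ collapses to $e_T$ so that $T_n = T$, and chain the $m\mathsf{Con}^k(T)$ layers downward via $\Sigma_1$ induction to conclude $\mathsf{m}^{\omega}T \vdash \neg m\mathsf{Con}(\mathsf{m}^{\omega}T) \to m\mathsf{Con}(\tau_0)$; the Beklemishev observation that no consistent $\Sigma_{m+1}$-extension of $\mathsf{m}^{\omega}T$ proves $m\mathsf{Con}(\mathsf{m}^{\omega}T)$ then discharges the assumption. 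The main obstacle, and the only genuinely new work beyond Theorem~\ref{craig-trick-theorem}, is managing the complexity of the case split so that $\tau$ lands in $\Pi_m$ and so that uniqueness of the numerated index is $\mathsf{B\Sigma}_m$-provable; both points are handled by applying $\mathsf{B\Sigma}_m$-collection to the matrix exactly as in Theorem~\ref{craig-trick-theorem} before the diagonal fixed-point is taken.
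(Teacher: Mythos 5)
Your overall strategy matches the paper's: extract an index sequence from the piecewise slice construction, diagonalize, and transfer the $m$-consistency argument from Theorem \ref{craig-trick-theorem}. But there is a genuine gap in the complexity analysis, and it concerns exactly the point you identify as ``the only genuinely new work.''

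The formula you call $C(n)$ — ``there exists a Skolem-witness sequence $s$ such that for all $k\leq n$ \dots'' — is $\Sigma_m$, not $\Pi_{m-1}$, because the quantifier $\exists s$ is \emph{unbounded}. No amount of $\mathsf{B\Sigma}_m$-collection will convert an unbounded $\exists s\,(\Pi_{m-1})$ into a $\Pi_{m-1}$ formula. What makes the analogous condition $\Pi_{m-1}$-equivalent in Theorem \ref{craig-trick-theorem} is that there the witness is \emph{bounded by the entity being described}: the slice axiom is a padded conjunction $\bigwedge_{i<s}m\mathsf{Con}(\cdot)$, so $s<x$, and collection then handles the bounded $\exists s<x\,(\Pi_{m-1})$. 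Your index-side formula $(C(n)\wedge x=g(n))\vee(\neg C(n)\wedge x=e_T)$ has no such tie between $s$ and $x$: $g(n)$ is a fixed elementary function of $n$ alone, so it cannot serve as a bound for a Skolem sequence whose size depends on the (non-computable) witnesses to true $\Sigma_m$ sentences. As written, your $\tau$ is a Boolean combination of $\Sigma_m$ and $\Sigma_0$ formulas, i.e.\ at best $\Delta_{m+1}$, not $\Pi_m$.

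The paper closes this gap with two devices that your proposal omits. First, the index-producing function takes an extra padding parameter $w$, $f(x,w,z)$ returning the $w$-th code of the relevant machine, so that indices of the ``active'' theory can be made arbitrarily large. Second, the active disjunct is a $\mu$-formula $\mu y.\exists s,w<y\;\psi(x,y,w,z,s)$: the index $y$ is chosen least such that $s,w<y$ works, which makes $\exists s$ a \emph{bounded} quantifier and keeps the disjunct $\Pi_m$ (after collection, and after adding the minimality clause which is $\Sigma_{m-1}$-bounded). Without binding the Skolem witness to the index variable in this way, the $\Pi_m$ bound on $\tau$ does not go through. This also affects your claim that clause (2) ``falls out with no additional effort'': with the $\mu$-operator in place, $T_n\vdash\exists!x\,\tau(n+1,x)$ requires a short internal argument using $\Pi_{m-1}$-induction to locate the least suitable index, which the paper carries out explicitly.

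A smaller discrepancy: you base the active branch on $T+\forall y(\tau(n+1,y)\to m\mathsf{Con}(y))$, whereas the paper bases it on $\mathsf{B\Sigma}_m$ plus a reflection axiom; since the theorem requires each $T_n$ to extend $T$, your choice is arguably the cleaner one, and this part is not a gap. Also, the paper handles only $m=0$ via Theorem \ref{fss-theorem} and runs the main construction for all $m\geq 1$; your treatment of $m=1$ via \cite[Theorem 3.8]{pakhomov2021reflection} is a harmless detour but not what the paper does.
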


\begin{proof}
For $m=0$, the statement holds by Theorem \ref{fss-theorem} (Friedman-Smoryński-Solovay). Though Theorem \ref{fss-theorem} is stated over $\mathsf{PA}$, the same proof generalizes to this case.

Now let's consider the case $m\geq 1$. We shall adapt the proof of Theorem \ref{craig-trick-theorem} to the indices case. Let $t(z,x)$ be (as in the proof of Theorem \ref{craig-trick-theorem}) a primitive recursive function which, when applied to a code $z$ of a formula $\phi(v,w)$ and a number $x$ returns the code of a formula $\phi(x+1,w)$. Recall that $T$ is an elementary formula defining a set of axioms. Let $e$ be the code of the Turing machine which lists the elements of $T$. Let $\mathsf{m}^{\omega}T$ be $\mathsf{I\Sigma}_m + \forall x \, m\mathsf{Con}^x(T)$ (as in the proof of Theorem \ref{thm::SigmaDefUniSeq}). Finally, let $f(x,w,z)$ be a primitive recursive function which returns the $w$-th code of a Turing machine $M$ such that, on input $x,z$, $M$ outputs $\ulcorner\mathsf{B\Sigma_m} + m\mathsf{Con}(\mathsf{B\Sigma_m} + t(z,x))\urcorner$.

    Let 
    {\footnotesize 
    \begin{align*}
        &\theta(x,s):=\forall k\leq x \; \forall A\leq x \Big( A\in\Pi_{m-1} \wedge \mathsf{Prf}_{m^{\omega}T}(k,\exists x A) \to  \mathsf{True}_{\Pi_{m-1}}\big(A(s_k)\big)\Big).\\
        &\psi(x,y,w,z,s):= \theta(x,s)\wedge y= f(x,w,z)\\
        &\mu y. \exists s,w<y \; \psi(x,y,w,z,s):= \exists s,w <y\big(\psi(x,y,w,z,s) \wedge \forall y'<y \;  \forall s,w <y' \neg \psi(x,y',w,z,s)\big)
    \end{align*}
    }
   Observe that $\theta(x,s)$ is a $\Pi_{m-1}$ formula and $\mu y. \exists s,w<y \; \psi(x,y,w,z,s)$ is $\mathsf{B\Sigma_m}$-provably equivalent to a $\Pi_m$ formula. Put
    \[\tau'(x,y,z):= \mu y. \exists s,w<y \; \psi(x,y,w,z,s)\vee (\forall s\neg\theta(x,s)\wedge y=e).\]
    Since we shall be working over $\mathsf{B\Sigma_m}$, we can assume that $\tau'(x,y,z)$ is a $\Pi_m$ formula. By the diagonal lemma we can fix a formula $\tau(x,y)$ such that provably in Robinson's arithmetic $\mathsf{Q}$
    \[\tau(x,y) \equiv \tau'(x,y,\ulcorner\tau(x,y)\urcorner).\]
    By the canonical proof of the diagonal lemma, we can assume that the complexity of $\tau(x,y)$ is $\Pi_m$. 

    Note that the definition of $\tau'$ is disjunctive; so in one case, it defines one theory, in another case, another theory. By considering cases, for each $n$ there is an index of a total Turing Machine $e_n$ such that $\tau(n,e_n)$ holds. Let $T_n$ be the theory computed by the index $e_n$ (recall that a theory is an axiom set and not its deductive closure). Observe first that $T_n$ contains $\mathsf{B\Sigma_m}$. We argue that for each $n$, $T_n\vdash \exists ! x \tau(n+1,x)$ (note that the uniqueness claim follows trivially from the definition of $\tau'$, but what matters is existence). Fix $n$ and reason in $T_n$. If $\forall s\neg\theta(n+1,s)$, then, by the definition of $\tau'$, we have $\forall x(\tau(n+1,x) \leftrightarrow x=e)$. Otherwise, let $A_1(x_1), \ldots, A_k(x_k)$ be all the $\Pi_{m-1}$-sentences provable in $m^{\omega}T$ whose proofs are below $n+1$. By the assumption that there is an $s$ such that $\bigwedge_{i\leq k} A_i(s_i)$ holds and by $\Pi_{m-1}$ induction we can choose the least such $s$. Now let $a$ be the least code of a Turing Machine which is greater than $s$ and for some $w<a$ satisfies $f(n+1,w,\ulcorner\tau(x,y)\urcorner)$. So in either case, the existence claim is verified.
    
    The proof that $T_0$ is $m$-consistent is exactly as in the slices case (proof of Theorem \ref{thm::SigmaDefUniSeq}). Likewise, since by assumption $\mathsf{m}^{\omega}T$ is $\Sigma_m$-sound, we conclude that for each $n$, $T_n$ proves
    \[\forall x \bigl(\tau(n+1,x)\rightarrow m\mathsf{Con}(x)\bigr).\]
    We note that the above sentence is $\Pi_{m+1}$. Hence by induction we can prove that for each $n$, $T_n$ is $m$-consistent.
\end{proof}

\subsubsection{A Negative Result}

\begin{theorem}\label{new-uni-negative}
    For each $m\geq 2$, there is no $\Sigma_m$-definable uniform $m$-sequence over $\mathsf{B\Sigma_1}$.
\end{theorem}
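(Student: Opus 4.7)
The plan is to adapt the shifting argument from Theorem \ref{generalization} to the index encoding. Suppose for contradiction that some $\Sigma_m$ formula $\tau$ defines a uniform $m$-sequence $(T_n)_{n\in\mathbb{N}}$ over $\mathsf{B\Sigma_1}$. I will define a sentence $\mathsf{DS}$ asserting that some $\Sigma_m$ formula encodes a sequence of this form, prove $\mathsf{B\Sigma_1} + \mathsf{DS} \vdash \mathsf{Con}(\mathsf{B\Sigma_1} + \mathsf{DS})$, and then invoke G\"odel's second incompleteness theorem: the resulting inconsistency of $\mathsf{B\Sigma_1} + \mathsf{DS}$ yields $\mathsf{B\Sigma_1} \vdash \neg \mathsf{DS}$, contradicting the fact that $\tau$ witnesses $\mathsf{DS}$ in the standard model.

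Explicitly, let $\mathsf{DS}$ be the sentence $\exists \sigma \in \Sigma_m \bigl(\exists y\, \sigma(0,y) \wedge \theta_1(\sigma) \wedge \theta_2(\sigma) \wedge \theta_3(\sigma) \wedge \theta_4(\sigma)\bigr)$, where $\sigma(x,y)$ abbreviates $\mathsf{True}_{\Sigma_m}(\sigma(x,y))$ and $\theta_1(\sigma) := \forall y(\sigma(0,y) \to m\mathsf{Con}(y))$, $\theta_2(\sigma) := \forall x \forall y(\sigma(x,y) \to \forall \varphi(\mathsf{Pr}_{\mathsf{B\Sigma_1}}(\varphi) \to \mathsf{Pr}_y(\varphi)))$, $\theta_3(\sigma) := \forall x \forall y(\sigma(x,y) \to \mathsf{Pr}_y(\exists z\, \sigma(x+1,z)))$, and $\theta_4(\sigma) := \forall x \forall y(\sigma(x,y) \to \mathsf{Pr}_y(\forall z(\sigma(x+1,z) \to m\mathsf{Con}(z))))$. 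Using $m \geq 2$ (so that $\Sigma_1 \subseteq \Pi_m$), one checks that $\theta_1$ is $\Pi_{m+1}$ while $\theta_2, \theta_3, \theta_4$ are each $\Pi_m$. The crucial gain over the slice setting is that the index encoding lets me state $\theta_3$ with only universal quantifiers, so that its body $\Sigma_m \to \Sigma_1$ lies in $\Pi_m$; similarly for $\theta_4$.

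To prove $\mathsf{B\Sigma_1} + \mathsf{DS} \vdash \mathsf{Con}(\mathsf{B\Sigma_1} + \mathsf{DS})$, I reason inside $\mathsf{B\Sigma_1} + \mathsf{DS}$ using the existential witness $\sigma$. Pick $e_0$ with $\sigma(0,e_0)$ and let $T_0$ be the theory with index $e_0$; by $\theta_1(\sigma)$, $T_0$ is $m$-consistent. Define the shift $\sigma^\star(x,y) := \sigma(x+1,y)$, a $\Sigma_m$ formula elementary in $\sigma$. Instantiating $\theta_3(\sigma)$ and $\theta_4(\sigma)$ at $(0, e_0)$ shows respectively that $T_0 \vdash \exists y\, \sigma^\star(0,y)$ and $T_0 \vdash \theta_1(\sigma^\star)$. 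Each of $\theta_2(\sigma^\star), \theta_3(\sigma^\star), \theta_4(\sigma^\star)$ follows in $\mathsf{B\Sigma_1}$ from its $\sigma$-counterpart, so their conjunction is a true $\Pi_m$ sentence and hence consistent with the $m$-consistent theory $T_0$. Putting these observations together, $T_0$ is consistent with the full body of $\mathsf{DS}$ applied to $\sigma^\star$, so $T_0 + \mathsf{DS}$ is consistent, and since $T_0 \supseteq \mathsf{B\Sigma_1}$, so is $\mathsf{B\Sigma_1} + \mathsf{DS}$.

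The main obstacle is the complexity bookkeeping. A naive port of the slice proof's $\theta_3$ would introduce an existential inner quantifier, raising its complexity to $\Pi_{m+1}$ and placing the shifted conjunction outside the class of sentences with which an $m$-consistent theory is automatically consistent. What rescues the argument is that, under the index encoding, there is no need for an inner $\exists y$ picking a representative of the $x$-th theory; one may universally quantify over all $y$ with $\sigma(x,y)$. This keeps $\theta_3$ (and $\theta_4$) at $\Pi_m$, making $m$-consistency of $T_0$ sufficient to absorb the shifted conjunction. Once this point is handled, the remainder of the argument is an essentially mechanical adaptation of the shifting trick in Theorem \ref{generalization}.
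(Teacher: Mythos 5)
Your proposal is correct and is essentially the same argument as in the paper: encode the hypothesis as a $\Sigma_{m+2}$ sentence $\mathsf{DS}$, use the shift $\sigma^\star(x,y):=\sigma(x+1,y)$ plus $m$-consistency of $T_0$ to show $\mathsf{B\Sigma_1}+\mathsf{DS}\vdash\mathsf{Con}(\mathsf{B\Sigma_1}+\mathsf{DS})$, and conclude by G\"odel's second incompleteness theorem. The only cosmetic difference is that you reformulate $\theta_1$ as the universal statement $\forall y\bigl(\sigma(0,y)\to m\mathsf{Con}(y)\bigr)$ with a separate conjunct $\exists y\,\sigma(0,y)$, whereas the paper uses the existential form $\exists x\bigl(\mathsf{True}_{\Sigma_m}(\sigma(0,x))\wedge m\mathsf{Con}(x)\bigr)$; this lets you derive $T_0\vdash\theta_1(\sigma^\star)$ directly from $\theta_4(\sigma)$ and streamlines the combining step, but the complexity bookkeeping and the use of $m\geq 2$ to keep $\theta_2,\theta_3,\theta_4$ in $\Pi_m$ are exactly as in the paper's proof.
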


Note that this result immediately generalizes to extensions of $\mathsf{B\Sigma_1}$ since any $m$-sequence over an extension of $\mathsf{B\Sigma_1}$ is also an $m$-sequence over $\mathsf{B\Sigma_1}$.

\begin{proof}
Suppose that there is such a sequence. Then $\mathsf{DS}$ is true, where $\mathsf{DS}$ is the statement 
$$\exists \sigma \in \Sigma_m \big( \theta_1(\sigma) \wedge \theta_2(\sigma) \wedge \theta_3(\sigma)\wedge \theta_4(\sigma)\big)$$ where we have:
\begin{flalign}
    \theta_1(\sigma) &:= \exists x\big(\mathsf{True}_{\Sigma_m}(\sigma(0,x))\wedge m\mathsf{Con}(x)\big)\\
    \theta_2(\sigma) &:=  \forall x \forall y \big(\mathsf{True}_{\Sigma_m}(\sigma(x,y)) \to \mathsf{Pr}_y(\mathsf{B\Sigma_1})\big)\\
    \theta_3(\sigma) &:=  \forall x \forall y \big(\mathsf{True}_{\Sigma_m}(\sigma(x,y)) \to \mathsf{Pr}_y(  \exists z \sigma(x+1,z) )\big)\\
    \theta_4(\sigma) &:= \forall x \forall y\Big(\mathsf{True}_{\Sigma_m}(\sigma(x,y))  \to \mathsf{Pr}_y\big(\forall z \big( \mathsf{True}_{\Sigma_m}(\sigma(x+1,z))\to m\mathsf{Con}(z)\big) \Big)
\end{flalign}

Yet we will show that $\mathsf{B\Sigma_1}\vdash \neg \mathsf{DS}$. By G\"{o}del's second incompleteness theorem, it suffices to prove $\mathsf{Con}(\mathsf{B\Sigma_1}+\mathsf{DS})$ in $\mathsf{B\Sigma_1}+ \mathsf{DS}$. 

\emph{So let's reason in $\mathsf{B\Sigma_1}+ \mathsf{DS}$.}

Let's introduce the name $T_0$ for some witness to $\theta_1(\sigma)$. $T_0$ is $m$-consistent. From $\theta_3(\sigma)$, we know that $T_0$ proves the $\Sigma_m$ claim $\exists z \sigma(1,z)$. Since $T_0$ is $m$-consistent, this claim must be true. So let's introduce the name $T_1$ for some witness to the claim $\exists y \sigma(1,y)$.

\begin{claim}
    $T_0+\sigma(1,T_1)$ proves the $m$-consistency of $T_1$.
\end{claim}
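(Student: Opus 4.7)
The plan is to apply $\theta_4(\sigma)$ with $x = 0$ and $y = T_0$. Since $T_0$ was introduced as a witness to $\theta_1(\sigma)$, we have $\mathsf{True}_{\Sigma_m}(\sigma(0, T_0))$, and $\theta_4(\sigma)$ then yields
\[\mathsf{Pr}_{T_0}\Bigl(\forall z \bigl( \mathsf{True}_{\Sigma_m}(\sigma(1,z)) \to m\mathsf{Con}(z)\bigr)\Bigr);\]
that is, $T_0$ formally proves that every index satisfying $\sigma(1,\cdot)$ denotes an $m$-consistent theory.

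Next, I would instantiate this universally quantified statement inside $T_0$ with $z = T_1$, obtaining $T_0 \vdash \mathsf{True}_{\Sigma_m}(\sigma(1, T_1)) \to m\mathsf{Con}(T_1)$. Since $\sigma$ is $\Sigma_m$, the sentence $\sigma(1, T_1)$ (with numerals substituted for $1$ and $T_1$) is itself $\Sigma_m$, so the standard $\mathsf{EA}$-provable equivalence between a $\Sigma_m$ formula and its image under the partial truth predicate gives $\mathsf{EA} \vdash \sigma(1, T_1) \leftrightarrow \mathsf{True}_{\Sigma_m}(\sigma(1, T_1))$. Hence $T_0 + \sigma(1, T_1) \vdash \mathsf{True}_{\Sigma_m}(\sigma(1, T_1))$, and combining with the preceding implication yields $T_0 + \sigma(1, T_1) \vdash m\mathsf{Con}(T_1)$, as required.

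There is no substantive obstacle here; the step is essentially the direct unpacking of $\theta_4(\sigma)$, and the role of the supplementary axiom $\sigma(1, T_1)$ is only to pass from the universally quantified statement that $T_0$ proves to the instance for the particular index $T_1$. The real work went into the design of the $\theta_i$, which were set up precisely so that a single application of $\theta_4$ converts the "named witness" $T_1$ for the next slice into $T_0$-provable $m$-consistency of $T_1$ after one extra $\Sigma_m$ axiom is added.
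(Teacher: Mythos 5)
Your proof is correct and takes essentially the same approach as the paper: apply $\theta_4$ at $x=0$, $y=T_0$ to get that $T_0$ proves the universal $\forall z\bigl(\mathsf{True}_{\Sigma_m}(\sigma(1,z)) \to m\mathsf{Con}(z)\bigr)$, then use the extra axiom $\sigma(1,T_1)$ together with the $\Sigma_m$ Tarski biconditional to discharge the antecedent at $z = T_1$. If anything you are more careful than the paper's one-line derivation, whose intermediate assertion $\mathsf{Pr}_{T_0}\bigl(m\mathsf{Con}(T_1)\bigr)$ elides exactly the role of the supplementary axiom $\sigma(1,T_1)$ that you make explicit.
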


Consider the instance of $\theta_4(\sigma)$ where $x=0$: 
$$ \forall y\Big(\mathsf{True}_{\Sigma_m}(\sigma(0,y)) \to  \mathsf{Pr}_y\big(\forall z \big( \mathsf{True}_{\Sigma_m}(\sigma(1,z)) \to m\mathsf{Con}(z)) \big).$$
Since we have $\sigma(0,T_0)$ and $\sigma(1,T_1)$, we infer:
$$ \mathsf{Pr}_{T_0}\big(m\mathsf{Con}(T_1)\big).$$
That is, $T_0+\sigma(1,T_1)$ proves the $m$-consistency of $T_1$.

Now we consider the sequence $\sigma^\star$ that results from shifting all entries in $\sigma$ one to the left. That is:
$$\forall x \forall y \big( \sigma^\star(x,y) \leftrightarrow \sigma(x+1,y) \big).$$

\begin{claim}
    $T_0+\mathsf{DS}$ is consistent.
\end{claim}

Note that $\theta_2(\sigma^\star)$, $\theta_3(\sigma^\star)$, and $\theta_4(\sigma^\star)$ are true $\Pi_m$ claims (note that here we are using the assumption that $m\geq 2$). On the other hand, $\sigma^\star(0,T_1)$ is true $\Sigma_m$. So the conjunction:
$$\sigma^\star(0,T_1)\wedge \theta_2(\sigma^\star)\wedge \theta_3(\sigma^\star)\wedge \theta_4(\sigma^\star) $$
is true $\Sigma_{m+1}$. Since $T_0$ is $m$-consistent, $T_0$ is consistent with any true $\Sigma_{m+1}$ claim, so we infer that $T_0$ is consistent with this conjunction.

However, we know from the previous claim that $T_0+\sigma^*(0,T_1)$ proves the $m$-consistency of $T_1$. Yet $\sigma^\star(0,T_1) \wedge m\mathsf{Con}(T_1)$ jointly entail that $T_1$ witnesses $\theta_1(\sigma^\star)$. So $T_0$ is consistent with the conjunction:
$$\theta_1(\sigma^\star)\wedge\theta_2(\sigma^\star)\wedge \theta_3(\sigma^\star)\wedge \theta_4(\sigma^\star) $$
whence $T_0+\mathsf{DS}$ is consistent.

\begin{claim}
$\mathsf{B\Sigma_1}+\mathsf{DS}$ is consistent.
\end{claim}
This follows from the previous claim since $\theta_m(\sigma)$ informs us that $T_0$ proves $\mathsf{B\Sigma_1}$.
\end{proof}

\subsection{The Non-uniform Version}

We now prove our final main theorem. 

\begin{theorem}\label{main-negative}
    For $m\geq 2$, no $\Sigma_1$ non-uniform $m$-sequence over $\mathsf{B\Sigma_1}$ is pointwise provably inhabited.
\end{theorem}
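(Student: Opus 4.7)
The plan is to adapt the proof of Theorem \ref{new-uni-negative} to the non-uniform setting: formalize the existence of the hypothetical sequence as a single arithmetical statement $\mathsf{DS}$, and derive a contradiction by showing that $\mathsf{B\Sigma_1}+\mathsf{DS}\vdash\mathsf{Con}(\mathsf{B\Sigma_1}+\mathsf{DS})$. The main modification compared to Theorem \ref{new-uni-negative} lies in the fourth clause, since the non-uniform reflection condition $T_n\vdash m\mathsf{Con}(\underline{T_{n+1}})$ is externally a schema rather than a single arithmetized statement about $\sigma$.

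Let $\mathsf{DS}$ be $\exists\sigma\in\Sigma_m\bigl(\theta_1(\sigma)\wedge\theta_2(\sigma)\wedge\theta_3(\sigma)\wedge\theta_4'(\sigma)\bigr)$, where $\theta_1,\theta_2,\theta_3$ are exactly as in the proof of Theorem \ref{new-uni-negative}---in particular, $\theta_3$ internalizes the PPI hypothesis as ``every $y=T_x$ proves $\exists z\,\sigma(x+1,z)$''---and $\theta_4$ is replaced by the non-uniform variant
\[\theta_4'(\sigma) := \forall x\forall y\forall y'\Bigl(\mathsf{True}_{\Sigma_m}(\sigma(x,y))\wedge\mathsf{True}_{\Sigma_m}(\sigma(x+1,y'))\to\mathsf{Pr}_y\bigl(m\mathsf{Con}(\dot{y'})\bigr)\Bigr),\]
where $\dot{y'}$ denotes the numeral of $y'$. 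Since $\sigma$ encodes an index sequence, the pair $y,y'$ satisfying the two antecedents is unique in the standard model, so $\theta_4'(\sigma)$ is a bona fide first-order expression of the non-uniform descending condition. A complexity check shows $\theta_4'(\sigma)$ is $\Pi_m$ for $m\geq 2$: negating the $\Sigma_m$ antecedent gives $\Pi_m$, the $\mathsf{Pr}_y(\cdot)$ consequent is $\Sigma_1$, and for $m\geq 2$ their disjunction collapses to $\Pi_m$.

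Reasoning in $\mathsf{B\Sigma_1}+\mathsf{DS}$, fix $\sigma$ and a witness $T_0$ to $\theta_1(\sigma)$. By $\theta_3(\sigma)$, $T_0\vdash\exists z\,\sigma(1,z)$; by $\Sigma_m$-soundness of $T_0$ there is an actual $T_1$ with $\sigma(1,T_1)$. Instantiating $\theta_4'(\sigma)$ at $x=0$, $y=T_0$, $y'=T_1$ yields $\mathsf{Pr}_{T_0}\bigl(m\mathsf{Con}(\underline{T_1})\bigr)$. Now let $\sigma^\star(x,y):=\sigma(x+1,y)$ be the left shift; then $\theta_2(\sigma^\star)$, $\theta_3(\sigma^\star)$, and $\theta_4'(\sigma^\star)$ are true $\Pi_m$ sentences, and $\sigma^\star(0,T_1)$ is a true $\Sigma_m$ sentence, so their conjunction is a true $\Sigma_{m+1}$ sentence. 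By Lemma \ref{trivial-lemma}, $T_0$ is consistent with this conjunction, and combining it with $T_0\vdash m\mathsf{Con}(\underline{T_1})$ witnesses $\theta_1(\sigma^\star)$ in the resulting extension. Hence $T_0+\mathsf{DS}$ is consistent; since $T_0$ extends $\mathsf{B\Sigma_1}$ by $\theta_2$, so is $\mathsf{B\Sigma_1}+\mathsf{DS}$, contradicting G\"odel's second incompleteness theorem applied to $\mathsf{B\Sigma_1}+\mathsf{DS}$.

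The main obstacle is isolating the right $\theta_4'$: it must faithfully encode the non-uniform descending condition without any uniform appeal to $\sigma$, while staying at complexity $\Pi_m$ so that $\theta_4'(\sigma^\star)$ can be absorbed into the $m$-consistency of $T_0$ via Lemma \ref{trivial-lemma}. The numeral substitution $\dot{y'}$ inside $\mathsf{Pr}_y(\cdot)$ is the key move: it lets us state ``$y$ proves $m\mathsf{Con}$ of the specific index $y'$'' as a $\Sigma_1$ condition parameterized by $y$ and $y'$, so the outer universal closure remains $\Pi_m$. The PPI hypothesis is indispensable; without $\theta_3(\sigma)$ we could neither internally derive $T_0\vdash\exists z\,\sigma(1,z)$ nor externally guarantee the existence of the $T_1$ needed to launch the shift argument.
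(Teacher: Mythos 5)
Your proposal is correct and follows essentially the same route as the paper: the paper's own $\theta_4$ is already the formula $\forall x\,\forall y\,(\mathsf{True}_{\Sigma_m}(\sigma(x,y))\to\forall z\,(\mathsf{True}_{\Sigma_m}(\sigma(x+1,z))\to\mathsf{Pr}_y(m\mathsf{Con}(z))))$, which is logically the same as your $\theta_4'$ with the numeral for $z$ made explicit rather than left implicit. The complexity calculation, the shift to $\sigma^\star$, the use of Lemma~\ref{trivial-lemma} to absorb the true $\Sigma_{m+1}$ conjunction into the $m$-consistency of $T_0$, and the final appeal to G\"odel's second incompleteness theorem all match the paper's argument step for step.
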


\begin{proof}
Suppose that there is such a sequence. Then $\mathsf{DS}$ is true, where $\mathsf{DS}$ is the statement 
$$\exists \sigma \in \Sigma_m \big( \theta(\sigma) \wedge \theta_2(\sigma) \wedge \theta_3(\sigma)\wedge \theta_4(\sigma)\big)$$ where we have:
\setcounter{equation}{0}
\begin{flalign}
    \theta_1(\sigma) &:= \exists x\big(\mathsf{True}_{\Sigma_m}(\sigma(0,x))\wedge m\mathsf{Con}(x)\big)\\
    \theta_2(\sigma) &:=  \forall x \forall y \big(\mathsf{True}_{\Sigma_m}(\sigma(x,y)) \to \mathsf{Pr}_y(\mathsf{B\Sigma_1})\big)\\
    \theta_3(\sigma) &:=  \forall x \forall y \big(\mathsf{True}_{\Sigma_m}(\sigma(x,y)) \to \mathsf{Pr}_y(  \exists z \sigma(x+1,z) )\big)\\
    \theta_4(\sigma) &:= \forall x \forall y\Big(\mathsf{True}_{\Sigma_m}(\sigma(x,y)) \to \forall z \big( \mathsf{True}_{\Sigma_m}(\sigma(x+1,z)) \to \mathsf{Pr}_y\big(m\mathsf{Con}(z)) \big) 
\end{flalign}

Yet we will show that $\mathsf{B\Sigma_1}\vdash \neg \mathsf{DS}$. By G\"{o}del's second incompleteness theorem, it suffices to prove $\mathsf{Con}(\mathsf{B\Sigma_1}+\mathsf{DS})$ in $\mathsf{B\Sigma_1}+ \mathsf{DS}$. 

\emph{So let's reason in $\mathsf{B\Sigma_1}+ \mathsf{DS}$.}

Let's introduce the name $T_0$ for some witness to $\theta_1(\sigma)$. $T_0$ is $m$-consistent. From $\theta_3(\sigma)$, we know that $T_0$ proves the $\Sigma_m$ claim $\exists z \sigma(1,z)$. Since $T_0$ is $m$-consistent, this claim must be true. So let's introduce the name $T_1$ for some witness to the claim $\exists y \sigma(1,y)$.

\begin{claim}
    $T_0$ proves the $m$-consistency of $T_1$.
\end{claim}

Consider the instance of $\theta_4(\sigma)$ where $x=0$: 
$$ \forall y\Big(\mathsf{True}_{\Sigma_m}(\sigma(0,y)) \to \forall z \big( \mathsf{True}_{\Sigma_m}(\sigma(1,z)) \to \mathsf{Pr}_y\big(m\mathsf{Con}(z)) \big).$$
Since we have $\sigma(0,T_0)$ and $\sigma(1,T_1)$, we infer:
$$ \mathsf{Pr}_{T_0}\big(m\mathsf{Con}(T_1)\big).$$
That is, $T_0$ proves the $m$-consistency of $T_1$.

Now we consider the sequence $\sigma^\star$ that results from shifting all entries in $\sigma$ one to the left. That is:
$$\forall x \forall y \big( \sigma^\star(x,y) \leftrightarrow \sigma(x+1,y) \big).$$

\begin{claim}
    $T_0+\mathsf{DS}$ is consistent.
\end{claim}

Note that $\theta_2(\sigma^\star)$, $\theta_3(\sigma^\star)$, and $\theta_4(\sigma^\star)$ are true $\Pi_m$ claims. On the other hand, $\sigma^\star(0,T_1)$ is true $\Sigma_m$. So the conjunction:
$$\sigma^\star(0,T_1)\wedge \theta_2(\sigma^\star)\wedge \theta_3(\sigma^\star)\wedge \theta_4(\sigma^\star) $$
is true $\Sigma_{m+1}$. Since $T_0$ is $m$-consistent, $T_0$ is consistent with any true $\Sigma_{m+1}$ claim, so we infer that $T_0$ is consistent with this conjunction.

However, we know from the previous claim that $T_0$ proves the $m$-consistency of $T_1$. Yet $\sigma^\star(0,T_1) \wedge m\mathsf{Con}(T_1)$ jointly entail that $T_1$ witnesses $\theta_1(\sigma^\star)$. So $T_0$ is consistent with the conjunction:
$$\theta_1(\sigma^\star)\wedge\theta_2(\sigma^\star)\wedge \theta_3(\sigma^\star)\wedge \theta_4(\sigma^\star) $$
whence $T_0+\mathsf{DS}$ is consistent.

\begin{claim}
$\mathsf{B\Sigma_1}+\mathsf{DS}$ is consistent.
\end{claim}
This follows from the previous claim since $\theta_2(\sigma)$ informs us that $T_0$ proves $\mathsf{B\Sigma_1}$.
\end{proof}

\bibliographystyle{plain}
\bibliography{bibliography}

\end{document}